\definecolor{anti-flashwhite}{rgb}{0.95, 0.95, 0.96}
\definecolor{gray(x11gray)}{rgb}{0.75, 0.75, 0.75}
\theoremstyle{plain}
\newtheorem{Thm}{Theorem}[section]
\newtheorem{Lem}[Thm]{Lemma}
\newtheorem{Rem}[Thm]{\sl Remark}
\theoremstyle{definition}
\newtheorem{Def}[Thm]{Definition}
\theoremstyle{remark}
\numberwithin{equation}{section}
\begin{document}
\title[Fractional Banach Spaces]
      {Characterizations of Compact Operators on $\ell_{p}$ Type Fractional Sets of Sequences }

\author{Faruk \"{O}zger}

\address[\"{O}zger]{Department of Engineering Sciences, \.{I}zmir Katip \c{C}elebi University, Izmir, Turkey}
\email[\"{O}zger]{farukozger@gmail.com}

\subjclass[2010]{46B45, 47B37}
\keywords{Euler gamma function, fractional operator, compact operator, Hausdorff measure of noncompactness }

\begin{abstract}
Among the sets of sequences studied, difference sets of sequences are probably the most common type of sets. This paper considers some $\ell_{p}$ type fractional difference sequence spaces via Euler gamma function. Although we characterize compactness conditions on those spaces using the main tools of Hausdorff measure of noncompactness, we can only obtain sufficient conditions when the final space is $\ell _{\infty }$. However, we use some recent results to exactly characterize the classes of compact matrix operators when the final space is the set of bounded sequences.
\end{abstract}

\maketitle

\section{Introduction}
The Euler gamma function of a real number $ x $ (except zero and the negative integers) is defined by an improper integral:

\begin{eqnarray*}
\Gamma \left ( x \right )=\int_{0}^{\infty}e^{-t}t^{x-1}dt.
\end{eqnarray*}
\\
It is known that for any natural number  $n$, $\Gamma(n+1)=n!$, and $\Gamma(n+1)=n\Gamma(n)$ holds for any real number $n\notin\left \{ 0,-1,-2,... \right \}$.

The fractional difference operator for a fraction $\tilde{\alpha}$ have been defined in \cite{BalDut} as

\begin{eqnarray}\label{del}
\Delta ^{\left (  \tilde{\alpha} \right )}(x_{k})=\sum_{i=0}^{\infty}\left ( -1 \right )^{i}\frac{\Gamma \left ( \tilde{\alpha} +1 \right )}{\Gamma \left ( \tilde{\alpha} -i+1 \right )}x_{k-i}.
\end{eqnarray}

It is assumed that the series defined in (\ref{del}) is convergent for $x\in\omega$.

Let $m$ be a positive integer, then recall the difference operators $\Delta^{(1)}$ and $ \Delta ^{(m)}$ by:

\begin{eqnarray*}
(\Delta^{(1)}x)_{k}=\Delta^{(1)}x_{k}=x_{k}-x_{k-1}
\end{eqnarray*}
and
\begin{eqnarray*}
(\Delta^{(m)}x)_{k}=\sum_{i=0}^{m}(-1)^{i}\binom{m}{i}x_{k-i}.
\end{eqnarray*}

We write $\Delta$ and $\Delta^{(m)}$ for the matrices with $\Delta_{nk}=(\Delta^{(1)}e^{(k)})_{n}$ and
$\Delta_{nk}^{(m)}=(\Delta^{(m)}e^{(k)})_{n}$
for all $n$ and $k$. The topological properties of some spaces that are constructed by the matrix operator $\Delta^{(m)}$ were studied in the paper \cite{ColEt}. In  \cite{djolovic2}, some identities and estimates for the Hausdorff measure of noncompactness of matrix operators from $\Delta^{(m)}$ type spaces into the sets of bounded, convergent, null sequences and also absolutely convergent series were established.

We also write fractional difference operator as an infinite matrix:
\begin{equation*}
\Delta ^{\left (  \tilde{\alpha} \right )}_{nk} =\left\{
\begin{array}{lll}
(-1)^{n-k}\frac{\Gamma (\tilde{\alpha}+1)}{(n-k)!\Gamma (\tilde{\alpha}-n+k+1)} &  & (0\leq k\leq n) \\
0 &  & (k>n).%
\end{array}%
\right.
\end{equation*}

\begin{Rem}
The inverse of fractional difference matrix is given by
\begin{equation*}
\Delta ^{\left (  -\tilde{\alpha} \right )}_{nk} =\left\{
\begin{array}{lll}
(-1)^{n-k}\frac{\Gamma (-\tilde{\alpha}+1)}{(n-k)!\Gamma (-\tilde{\alpha}-n+k+1)} &  & (0\leq k\leq n) \\
0 &  & (k>n).%
\end{array}%
\right.
\end{equation*}
\end{Rem}
For some values of $\tilde{\alpha}$, we have
\begin{eqnarray*}
\Delta ^{1/2}x_{k}&=&x_{k}-\frac{1}{2}x_{k-1}-\frac{1}{8}x_{k-2}-\frac{1}{16}x_{k-3}-\frac{5}{128}x_{k-4}-...\\
\Delta ^{-1/2}x_{k}&=&x_{k}+\frac{1}{2}x_{k-1}+\frac{3}{8}x_{k-2}+\frac{5}{16}x_{k-3}+\frac{35}{128}x_{k-4}+...\\
\Delta ^{2/3}x_{k}&=&x_{k}-\frac{2}{3}x_{k-1}-\frac{1}{9}x_{k-2}-\frac{4}{81}x_{k-3}-\frac{7}{243}x_{k-4}-...
\end{eqnarray*}

The idea of constructing new sequence spaces via infinite matrices started with K\i zmaz's study \cite{kızmaz} and then it has been developed by numerous researchers using different triangles \cite{kaoz}, \cite{p15}, \cite{ema3}, \cite{ema}, \cite{FatEr}.

In the studies \cite{kaoz2}, \cite{AyBas}, \cite{fofb}, \cite{fofb2}, \cite{PolAl}, \cite{candan}, \cite{kirisci} \cite{Muretal} and \cite{Talo} different difference sequence spaces have been studied based on some newly defined infinite matrices. Some new results on the visualization and animations of the topologies of certain sequence spaces spaces have been illustrated in \cite{mo}, \cite{mov2}, \cite{mov3}.  The authors applied their software package for this purpose. Note that, those results have an interesting and important application in crystallography.

Many authors have made efforts to apply Hausdorff measure of noncompactness to find compactness conditions of certain sets of sequences during the past decade  \cite{ema2}, \cite{mo}, \cite{karabasmur} and \cite{murnom}.  

Fractional difference sequence spaces have been studied in the literature recently  \cite{Fur}, \cite{BalDut}, \cite{KadBal}. The authors of those papers especially studied on the properties of fractional operators in their researches in addition to focusing on certain fractional sequence spaces.

In this work, we consider the fractional sequence spaces $ \ell_{p} (\Delta ^{(  \widetilde{\alpha} )}  ) $ for $ 1\leq p< \infty $ and determine norm operators of our spaces. We establish some identities or estimates for the Hausdorff measures of noncompactness of certain operators on difference sequence spaces $ \ell_{p} (\Delta ^{(  \widetilde{\alpha} )}  ) $  of fractional orders. We characterize some classes of compact operators on those spaces. Note that, we can only obtain sufficient conditions when the final space is $\ell _{\infty }$. However, we use the results in \cite{katarina} and \cite{Sargent} to obtain necessary and sufficient conditions for the classes of compact matrix operators from $\ell_{p} (\Delta ^{(  \widetilde{\alpha} )}  )$ spaces into the sets of bounded sequences and for the classes $(\ell_{1} (\Delta ^{(  \widetilde{\alpha} )}  ), \ell_{\infty })$ and $(\ell _{\infty }(\Delta ^{(  \widetilde{\alpha} )}  ),\ell _{\infty })$.

\newpage

\section{Preliminaries}

The $\beta$ dual of a set $X$ is defined by

\begin{equation*}
X^{\beta}=\{a\in \omega:a\cdot x\in cs \mbox{ for all }x\in X\}.
\end{equation*}

Note that $c_{0}^{\beta}=c^{\beta}=\ell_{\infty}^{\beta}=\ell_{1}$ and $\ell_{p}^{\beta}=\ell_{q}$.

Given any infinite matrix $A=(a_{nk})_{n,k=0}^{\infty }$ of complex
numbers and any sequence $x$, we write $A_{n}=(a_{nk})_{k=0}^{\infty
}$ for the
sequence in the $n^{th}$ row of $A$, $A_{n}x=\mbox{$\sum_{k=0}^{\infty}$}
a_{nk}x_{k}$ $(n=0,1,\dots )$ and $Ax=(A_{n}x)_{n=0}^{\infty }$, provided
$A_{n}\in X^{\beta }$ for all $n$.

If $X$ and $Y$ are subsets of
$\omega $, then
\begin{equation*}
X_{A}=\{x\in \omega :Ax\in X\}
\end{equation*}

denotes the matrix domain of $A$ in $X$
and $(X,Y)$ is the class of all infinite matrices that map $X$ into $Y$; so $A\in (X,Y)$ if and only if $X\subset Y_{A}$.

Consider now the following fractional difference sequence spaces for $1\leq p <\infty$:

\begin{eqnarray*}
\ell_{p} (\Delta ^{(  \widetilde{\alpha} )}  )&:=&\left \{x=(x_{k})\in\omega: \sum_{n=0}^{\infty}\left\vert \sum_{i=0}^{\infty}(-1)^{i}\frac{\Gamma(\tilde{\alpha} +1)}{i!\Gamma(\tilde{\alpha} -i+1)}x_{n-i}
\right\vert^{p}<\infty\right \}.
\end{eqnarray*}

Now let us define the sequence $y =(y _{k})$ which will be
used, by the $\Delta ^{(  \tilde{\alpha} )}$-transform of a sequence $
x=(x_{k}) $, that is,

\begin{eqnarray*}
y_{k}&=& x_{k}-\tilde{\alpha} x_{k-1}+\frac{\tilde{\alpha} (\tilde{\alpha} -1)}{2!}x_{k-2}-\frac{\tilde{\alpha} (\tilde{\alpha} -1)(\tilde{\alpha} -2)}{3!}x_{k-3}+\cdots  \\
&=&\sum_{i=0}^{\infty}(-1)^{i}\frac{\Gamma(\tilde{\alpha} +1)}{i!\Gamma(\tilde{\alpha} -i+1)}x_{k-i}. \label{yk}
\end{eqnarray*}

Hence, those spaces can be considered as the matrix domains of the triangle $\Delta ^{(  \tilde{\alpha} )}$ in the classical sequence spaces $\ell_{p}$, where $1\leq p <\infty$. We also have the following relation between the sequences $x=(x_{k}) $ and $y=(y_{k}) $:

\begin{eqnarray*}
x_{k}=\sum_{i=0}^{\infty}(-1)^{i}\frac{\Gamma(-\tilde{\alpha} +1)}{i!\Gamma(-\tilde{\alpha} -i+1)}y_{k-i}.
\end{eqnarray*}

\begin{Lem}
\label{BKspace} \cite[Theorem 4.3.12, p. 63]{Wil2} Let
$\left( X,\left\Vert .\right\Vert \right) $ be a $ BK $ space. Then
$X_{T}$\ is a $ BK $ space with $\left\Vert .\right\Vert _{T}=\left\Vert
T\left( .\right) \right\Vert .$
\end{Lem}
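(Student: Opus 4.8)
The plan is to verify, in turn, the three properties that make $(X_T,\|\cdot\|_T)$ a $BK$ space: that it is a normed linear space, that it is complete, and that each coordinate map is continuous on it. Throughout I will use that $T$ is a triangle, so that $T$ is a bijection of $\omega$ onto itself whose inverse $S=T^{-1}$ is again a triangle, with $TS=ST=I$ and with all relevant matrix products reducing to finite sums; in particular $(Tx)_k=\sum_{j=0}^{k}t_{kj}x_{j}$ and $x_k=\sum_{j=0}^{k}s_{kj}(Tx)_{j}$ for every $x\in\omega$ and every $k$. Granting this, $X_T=\{x\in\omega:Tx\in X\}$ is a linear subspace of $\omega$ by linearity of $x\mapsto Tx$, and the same linearity transports absolute homogeneity and the triangle inequality from $\|\cdot\|$ on $X$ to $\|\cdot\|_T$ on $X_T$; finally, if $\|x\|_T=\|Tx\|=0$ then $Tx=0$, and injectivity of the triangle $T$ forces $x=0$, so $\|\cdot\|_T$ is positive definite.

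For completeness, let $(x^{(n)})_n$ be a Cauchy sequence in $(X_T,\|\cdot\|_T)$. By definition of the norm, $(Tx^{(n)})_n$ is Cauchy in $X$, hence converges in $X$ to some $y$ because $X$ is a Banach space. Setting $x=Sy\in\omega$, we get $Tx=TSy=y\in X$, so $x\in X_T$, and $\|x^{(n)}-x\|_T=\|Tx^{(n)}-y\|\to 0$; thus every Cauchy sequence in $X_T$ converges.

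For continuity of the coordinates, fix $k$. Since $X$ is a $BK$ space, each coordinate functional $z\mapsto z_j$ is continuous on $X$; composing with $T$ and recalling $\|x\|_T=\|Tx\|$, the map $x\mapsto (Tx)_j$ is continuous on $X_T$ for every $j$. Writing $x_k=\sum_{j=0}^{k}s_{kj}(Tx)_{j}$ then exhibits $x\mapsto x_k$ as a finite linear combination of continuous maps, so it is continuous on $X_T$, completing the verification.

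The only step that genuinely needs care is the structural fact invoked at the outset: that a triangle $T$ admits a unique two-sided inverse $S$ which is itself a triangle, and that $T(Sy)=(TS)y=y$ holds entrywise for all $y\in\omega$. This is exactly where the lower-triangular, nonzero-diagonal shape of $T$ is used --- it makes the recursion for the entries of $S$ terminate after finitely many steps and legitimizes the rearrangements of finite sums used above. Everything else is a routine transfer of the $BK$ axioms from $X$ to $X_T$ through the linear bijection $T$.
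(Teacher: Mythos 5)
Your proof is correct. The paper itself offers no argument for this lemma --- it is quoted verbatim from Wilansky \cite[Theorem 4.3.12]{Wil2} --- and your verification (linearity and positive definiteness via injectivity of the triangle $T$, completeness via the inverse triangle $S=T^{-1}$ pulling the limit back from $X$, and coordinate continuity via the finite expansion $x_k=\sum_{j=0}^{k}s_{kj}(Tx)_j$) is exactly the standard textbook proof of that cited result. You are also right to flag that the hypothesis that $T$ is a triangle, though suppressed in the paper's statement, is what makes the whole argument work.
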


By Lemma \ref{BKspace}, defined fractional difference sequence space is a complete, linear, $BK$ space with the following norm:

\begin{eqnarray*}
\left\Vert x\right\Vert=\left(\sum\limits_{n}\left\vert  \sum_{i=0}^{\infty}(-1)^{i}\frac{\Gamma(\tilde{\alpha} +1)}{i!\Gamma(\tilde{\alpha} -i+1)}x_{n-i}
\right\vert^{p}\right)^ \frac{1}{p}.
\end{eqnarray*}

Let $X$ be a normed space. Then $S_{X}=\{x\in X:\left\Vert x\right\Vert =1\}$ and $\bar{B}_{X}=\{x\in
X:\left\Vert x\right\Vert \leq 1\}$ denote the unit sphere and closed unit ball in $X$, where $X$ is a normed space. 

By $\mathcal{F}_{r}  $ $ (r=0,1,\dots) $, we denote the subcollection of $\mathcal{F}$ consisting of all nonempty and finite subsets of $ \mathbb{N}$ with terms that are greater than than $ r $, that is 

\[
\mathcal{F}_{r} =\left\lbrace N \in \mathcal{F}_{r}: n>r \textsl{    for all  } n \in N \right\rbrace  \ (r=0,1,\dots).
\]

Given $a\in \omega $, we write
\begin{equation*}
	\left\Vert a\right\Vert _{X}^{\ast }=\sup\limits_{x\in
		S_{X}}\left\vert \sum\limits_{k=1}^{\infty }a_{k}x_{k}\right\vert
\end{equation*}%
provided the expression on the write hand side is defined and finite
which is the case whenever $X$ is a $BK$ space and $a\in X^{\beta }$.

\begin{Lem}\label{lem1.3}
Let $X$ and $Y$ be $BK$ spaces.
\begin{itemize}
\item[(i) ] Then we have $\left( X,Y\right)\subset\mathcal{B}\left( X,Y\right)$, that is,
every  $A\in\left( X,Y\right)$ defines an operator
$L_{A}\in\mathcal{B}\left( X,Y\right)$, where $L_{A}(x)=Ax$ for all
$x\in X$ (see \cite[Theorem 1.23]{mara}).
\item[(ii) ] We have $\Vert x \Vert_{X}^{*}=\Vert x \Vert_{X^\beta}$ for all $x\in X^\beta$, where $\Vert \cdot \Vert_{X^\beta}$ is the natural norm on the dual set $X^\beta$ (see \cite[Theorem 3.2]{Mara7}).
 \end{itemize}
\end{Lem}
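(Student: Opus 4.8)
The plan is to treat the two parts separately, each reducing to a standard principle of functional analysis together with the defining feature of a $BK$ space, namely that its coordinate functionals are continuous.

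For (i), the plan is to apply the closed graph theorem. Given $A\in(X,Y)$, the map $L_{A}\colon X\to Y$, $L_{A}(x)=Ax$, is well defined and linear, and both $X$ and $Y$ are Banach, so it suffices to check that the graph of $L_{A}$ is closed. The first step is to observe that each row map $f_{n}(x)=A_{n}x=\sum_{k}a_{nk}x_{k}$ is a continuous linear functional on $X$: the partial sums $x\mapsto\sum_{k=0}^{m}a_{nk}x_{k}$ are finite linear combinations of coordinate functionals, hence continuous because $X$ is a $BK$ space; they converge pointwise on $X$ because $A_{n}\in X^{\beta}$; so by the uniform boundedness principle their pointwise limit $f_{n}$ is continuous. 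Now suppose $x^{(j)}\to x$ in $X$ and $L_{A}x^{(j)}\to y$ in $Y$. Continuity of $f_{n}$ gives $A_{n}x^{(j)}\to A_{n}x$, while continuity of the $n$-th coordinate functional on the $BK$ space $Y$ gives $A_{n}x^{(j)}\to y_{n}$; comparing limits yields $y_{n}=A_{n}x$ for every $n$, i.e. $y=Ax=L_{A}x$. Hence the graph is closed and $L_{A}\in\mathcal{B}(X,Y)$.

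For (ii), the identity $\Vert x\Vert_{X}^{*}=\Vert x\Vert_{X^{\beta}}$ expresses that the functional-norm of $x$ acting on $X$ coincides with the norm that makes $X^{\beta}$ a normed sequence space in the canonical way. The plan is to use that this natural norm is defined precisely so that the map $X^{\beta}\to X'$ sending $a$ to the functional $x\mapsto\sum_{k}a_{k}x_{k}$ is an isometry onto its (closed) image in the continuous dual $X'$; the operator norm of that functional is by definition $\sup_{x\in S_{X}}\bigl|\sum_{k}a_{k}x_{k}\bigr|=\Vert a\Vert_{X}^{*}$, and the isometry identifies it with $\Vert a\Vert_{X^{\beta}}$. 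I would cite \cite[Theorem 3.2]{Mara7} for the construction of this norm and for the verification that the embedding is isometric, rather than reproducing that development here.

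The step I expect to be the real obstacle is (ii): part (i) is a clean closed-graph argument that uses nothing beyond the $BK$ property and the Banach--Steinhaus theorem, but (ii) only has content once a specific natural norm on $X^{\beta}$ has been fixed, and the substance of the statement is exactly the isometric identification carried out in \cite{Mara7}; the honest course is therefore to reduce (ii) to that reference.
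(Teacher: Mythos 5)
Your proposal is correct, and it is consistent with how the paper handles this statement: the paper offers no proof of its own, quoting both parts as known results from \cite{mara} and \cite{Mara7}. Your closed-graph argument for (i) --- continuity of each row functional $f_n(x)=A_nx$ via Banach--Steinhaus applied to the partial-sum functionals, then coordinatewise identification of the limit using the $BK$ property of $Y$ --- is exactly the standard proof of the cited Theorem 1.23, and your reduction of (ii) to the isometric identification of $X^\beta$ inside $X'$ established in \cite[Theorem 3.2]{Mara7} matches the paper's own deferral to that reference.
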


\begin{Lem}\label{operatornorm.1}
	Let $Y$ be an arbitrary subset of $\omega$ and $X$ be a $BK$ space with AK or $X=\ell_{\infty}$, and $R=S^{t} $. Then $A\in(X_{T},Y)$ if and only if $\hat{A}\in\left( X,Y\right)$ and $W^{(n)}\in(X,c_{0})$ for all $n=0,1,\ldots $. Here $\hat{A}$ is the matrix with rows $\hat{A}_{n}=RA_{n}$ for $n=0,1,\dots$, and the triangles $W^{(n)}$ $(n=0,1,\ldots)$ are defined by $ w_{mk}^{(n)} =\sum_{j=m}^{\infty}a_{nj}s_{jk}$.
	
	Moreover, if $A\in\left( X,Y\right)$ then we have $Az=\hat{A}(Tz)$ for all $z\in Z=X_{T}$ (see \cite{Mara7}, Theorem 3.4, Remark 3.5(a)).
\end{Lem}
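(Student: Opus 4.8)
This is a known result (\cite[Theorem 3.4 and Remark 3.5(a)]{Mara7}); I outline how I would argue it. Write $T$ for the triangle whose matrix domain defines $X_{T}$, let $S=(s_{jk})=T^{-1}$ be its (triangular) inverse, and recall $R=S^{t}$. Since $T$ is a triangle, $z\mapsto Tz$ is a bijection of $X_{T}$ onto $X$, and by Lemma \ref{BKspace} it is an isometric isomorphism of $BK$ spaces; throughout I put $y=Tz$, so $z=Sy$, that is $z_{k}=\sum_{j=0}^{k}s_{kj}y_{j}$. The plan is twofold: (a) analyse each row of $A$ separately to decide when $A_{n}\in(X_{T})^{\beta}$ and to compute $A_{n}z$; then (b) transport the requirement $Az\in Y$ across the bijection $T$.

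For (a), I would fix $n$, put $a=A_{n}$ and $\hat a=\hat A_{n}=Ra$ (so $\hat a_{j}=\sum_{k\ge j}a_{k}s_{kj}$), and note that $w^{(n)}_{p,k}=\hat a_{k}$ whenever $k\ge p$, since $s_{jk}=0$ for $j<k$. For $z\in X_{T}$ and $y=Tz$, I would insert $z_{k}=\sum_{j=0}^{k}s_{kj}y_{j}$, interchange the finite sums, and use $\sum_{k=j}^{m}a_{k}s_{kj}=\hat a_{j}-w^{(n)}_{m+1,j}$ to obtain, after a short rearrangement, the identity
\begin{equation*}
\sum_{k=0}^{m}a_{k}z_{k}=\sum_{j=0}^{\infty}\hat a_{j}y_{j}-\bigl(W^{(n)}y\bigr)_{m+1}\qquad(m=0,1,\dots).
\end{equation*}
Because the $0$-th row of $W^{(n)}$ equals $\hat A_{n}$, the hypothesis $W^{(n)}\in(X,c_{0})$ already forces $\hat A_{n}\in X^{\beta}$; the right-hand side above is then meaningful, and letting $m\to\infty$ its last term tends to $0$, so $A_{n}z$ converges with sum $\hat A_{n}(Tz)$. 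For the converse I would note that if $A_{n}\in(X_{T})^{\beta}$, the tail $\sum_{k>m}a_{k}z_{k}$ of the convergent series $A_{n}z$ tends to $0$, while the same rearrangement identifies this tail with $\bigl(W^{(n)}y\bigr)_{m+1}$; hence $W^{(n)}y\in c_{0}$ for every $y\in X$, that is $W^{(n)}\in(X,c_{0})$. This would give: $A_{n}\in(X_{T})^{\beta}$ for all $n$ if and only if $W^{(n)}\in(X,c_{0})$ for all $n$, and in that case $Az=\hat A(Tz)$ for all $z\in X_{T}$.

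For (b), by definition $A\in(X_{T},Y)$ means $A_{n}\in(X_{T})^{\beta}$ for all $n$ together with $Az\in Y$ for all $z\in X_{T}$. By (a) the first part is exactly $W^{(n)}\in(X,c_{0})$ for all $n$, and under it $Az=\hat A(Tz)$; since $z\mapsto Tz$ maps $X_{T}$ onto $X$, the remaining condition $Az\in Y$ for all $z\in X_{T}$ is equivalent to $\hat A y\in Y$ for all $y\in X$, i.e.\ $\hat A\in(X,Y)$. Combining, $A\in(X_{T},Y)$ if and only if $\hat A\in(X,Y)$ and $W^{(n)}\in(X,c_{0})$ for all $n$, and the identity $Az=\hat A(Tz)$ is already in hand.

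The step I expect to be the main obstacle is the justification of the summation interchanges and of the limit passage in (a). For $X$ with AK one should approximate each $y\in X$ by its finite sections $\sum_{k\le m}y_{k}e^{(k)}$ and transfer the convergence statements using that $X_{T}$ is a $BK$ space (Lemma \ref{BKspace}) together with a uniform boundedness argument (Lemma \ref{lem1.3}(i)); the case $X=\ell_{\infty}$, which lacks AK, has to be handled separately, exploiting $\ell_{\infty}^{\beta}=\ell_{1}=c_{0}^{\beta}$ and uniform convergence. The complete argument is carried out in \cite{Mara7}.
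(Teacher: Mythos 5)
The paper does not prove this lemma at all: it is quoted verbatim from Malkowsky and Rako\v{c}evi\'{c} with the citation ``\cite{Mara7}, Theorem 3.4, Remark 3.5(a)'' standing in for a proof, so there is no internal argument to compare yours against. Your outline is a faithful reconstruction of the cited source's argument: reduce to a row-by-row analysis of $(X_{T})^{\beta}$ via the Abel-type summation identity, identify the tail of $\sum_k a_k z_k$ with $(W^{(n)}y)_{m+1}$, and then transport $Az\in Y$ across the bijection $z\mapsto Tz$ to get $\hat{A}\in(X,Y)$.

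Two caveats on the details. First, your identity $\sum_{k=0}^{m}a_{k}z_{k}=\sum_{j=0}^{\infty}\hat a_{j}y_{j}-(W^{(n)}y)_{m+1}$ is consistent only with the non-truncated reading of $w^{(n)}_{mk}=\sum_{j\ge m}a_{nj}s_{jk}$ (under which $w^{(n)}_{mk}=\hat a_{nk}$ for $k\ge m$ and $W^{(n)}$ is not actually a triangle, despite the lemma's wording); if $W^{(n)}$ is truncated to a genuine triangle, the first term on the right must be the partial sum $\sum_{j=0}^{m+1}\hat a_{j}y_{j}$, and your shortcut ``row $0$ of $W^{(n)}$ equals $\hat A_{n}$, hence $W^{(n)}\in(X,c_0)$ forces $\hat A_n\in X^\beta$'' no longer works (each truncated row is finitely supported). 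This is harmless for the full equivalence, since $\hat A\in(X,Y)$ supplies $\hat A_n\in X^\beta$ on the relevant side, but it makes your standalone claim in (a) --- that $A_n\in(X_T)^\beta$ for all $n$ iff $W^{(n)}\in(X,c_0)$ for all $n$ --- convention-dependent. Second, in the converse direction of (a) you must know that $\hat a_j=\sum_{k\ge j}a_k s_{kj}$ converges and that $\hat A_n\in X^\beta$ before the tail can be identified with $(W^{(n)}y)_{m+1}$; for $X$ with AK this comes from applying $A_n$ to the sections of $y$ together with a Banach--Steinhaus argument, and the case $X=\ell_\infty$ genuinely needs the separate treatment you mention. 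You correctly flag both points as the technical core and defer them to \cite{Mara7}, which is where they are carried out.
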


We have the following results the for operator norms of bounded operators by \cite[Theorem 2.8]{djolovıc}.
\begin{Lem}\label{lem1.4}
Let $X$ be a $BK$ space.
\begin{itemize}
\item[(i) ] If $A\in (X,\ell_{1})$, then
\[
\Vert A\Vert_{(X,\ell_1)}\leq \Vert L_{A}\Vert \leq 4\cdot \Vert A\Vert_{(X,\ell_1)},
\]
where 
\[\Vert A\Vert_{(X,\ell_1)}=\sup_{N \in \mathcal{F}} \left\|  \sum_{n\in N} A_n\right\| _{X}^{*} < \infty.\]

\item[(ii) ] If $A\in (X,Y)$, then
\[
\Vert L_{A}\Vert=\Vert A \Vert _{(X,\ell_{\infty}) }=\sup_{n}\Vert A_n \Vert_{X }^* < \infty,
\]
where $Y$ is any of the spaces $c_0,c,\ell_\infty$.
\end{itemize}
\end{Lem}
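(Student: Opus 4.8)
The plan is to read off both statements from the elementary description of how $L_{A}$ acts on the closed unit ball $\bar{B}_{X}$, combined with the fact (Lemma \ref{lem1.3}(ii)) that $\|A_{n}\|_{X}^{*}$ is exactly the norm of the functional $x\mapsto A_{n}x$ on $X$. Two preliminary remarks are used throughout. First, $A\in(X,\ell_{1})$ (respectively $A\in(X,Y)$ with $Y\in\{c_{0},c,\ell_{\infty}\}$) forces $L_{A}\in\mathcal{B}(X,\ell_{1})$ (respectively $L_{A}\in\mathcal{B}(X,Y)$) by Lemma \ref{lem1.3}(i), so $\|L_{A}\|<\infty$; hence, once the norm identities are proved, the asserted finiteness of $\|A\|_{(X,\ell_{1})}$ and of $\sup_{n}\|A_{n}\|_{X}^{*}$ is automatic. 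Second, for a bounded linear functional the supremum over $S_{X}$ equals the supremum over $\bar{B}_{X}$, so that $\|a\|_{X}^{*}=\sup_{x\in\bar{B}_{X}}\bigl|\sum_{k}a_{k}x_{k}\bigr|$ for every $a\in X^{\beta}$.

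For part (ii), since $c_{0}$ and $c$ are subspaces of $\ell_{\infty}$ carrying the same norm, it is enough to take $Y=\ell_{\infty}$. For $x\in\bar{B}_{X}$ we have $L_{A}x=Ax=(A_{n}x)_{n}$, so $\|L_{A}x\|_{\infty}=\sup_{n}|A_{n}x|$; taking the supremum over $x\in\bar{B}_{X}$ and interchanging the two suprema of nonnegative quantities gives
\[
\|L_{A}\|=\sup_{x\in\bar{B}_{X}}\sup_{n}|A_{n}x|=\sup_{n}\sup_{x\in\bar{B}_{X}}|A_{n}x|=\sup_{n}\|A_{n}\|_{X}^{*},
\]
which is the claim.

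For part (i) the lower bound is immediate: for $N\in\mathcal{F}$ and $x\in\bar{B}_{X}$,
\[
\Bigl|\Bigl(\sum_{n\in N}A_{n}\Bigr)x\Bigr|\le\sum_{n\in N}|A_{n}x|\le\sum_{n=0}^{\infty}|A_{n}x|=\|Ax\|_{1}=\|L_{A}x\|_{1}\le\|L_{A}\|,
\]
so $\bigl\|\sum_{n\in N}A_{n}\bigr\|_{X}^{*}\le\|L_{A}\|$, and taking the supremum over $N$ yields $\|A\|_{(X,\ell_{1})}\le\|L_{A}\|$. For the upper bound, fix $x\in\bar{B}_{X}$ and a finite $N\subset\mathbb{N}$, and split $N=N_{1}\cup N_{2}$ according to whether $\mathrm{Re}(A_{n}x)\ge0$ or $<0$, and $N=N_{3}\cup N_{4}$ according to whether $\mathrm{Im}(A_{n}x)\ge0$ or $<0$. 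For each $j$ one has $\bigl|\mathrm{Re}\bigl(\sum_{n\in N_{j}}A_{n}x\bigr)\bigr|\le\bigl|\bigl(\sum_{n\in N_{j}}A_{n}\bigr)x\bigr|\le\bigl\|\sum_{n\in N_{j}}A_{n}\bigr\|_{X}^{*}\le\|A\|_{(X,\ell_{1})}$, and likewise with $\mathrm{Im}$ in place of $\mathrm{Re}$; summing these four bounds and using $|z|\le|\mathrm{Re}\,z|+|\mathrm{Im}\,z|$ gives $\sum_{n\in N}|A_{n}x|\le4\|A\|_{(X,\ell_{1})}$. Letting $N$ exhaust $\mathbb{N}$ and then taking the supremum over $x\in\bar{B}_{X}$ produces $\|L_{A}\|\le4\|A\|_{(X,\ell_{1})}$. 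The one step that is not purely formal — and the only place where real care is needed — is this four-fold decomposition of $N$: it is where the scalar field enters (the constant $4$ reflects splitting a complex number into four nonnegative real parts; over $\mathbb{R}$ it would be $2$), and one must restrict to finite subsets $N$ before summing so that every rearrangement stays inside $\ell_{1}$.
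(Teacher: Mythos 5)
Your proposal is correct. Note that the paper itself gives no proof of this lemma: it is stated as a quoted result, citing Theorem 2.8 of Djolovi\'{c} and Malkowsky, so there is no in-paper argument to compare against. Your argument is the standard proof of that cited result: part (ii) is the elementary interchange of the two suprema together with the observation that $c_{0}$ and $c$ carry the $\ell_{\infty}$-norm, and in part (i) the lower bound follows from $\bigl|\bigl(\sum_{n\in N}A_{n}\bigr)x\bigr|\leq\Vert Ax\Vert_{1}$ while the upper bound rests on the four-fold sign decomposition of a finite index set according to the signs of $\mathrm{Re}(A_{n}x)$ and $\mathrm{Im}(A_{n}x)$, which you carry out correctly (including the restriction to finite $N$ before passing to the limit, and the source of the constant $4$ in the complex scalar field). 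The finiteness assertions are correctly reduced to $\Vert L_{A}\Vert<\infty$, which follows from Lemma \ref{lem1.3}(i).
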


We obtain the following result as an immediate consequence of Lemma \ref{lem1.3}(ii).
\begin{Lem}\label{lem6.3} Let $1\leq p<\infty$, then we have  $\Vert x \Vert_{\ell_{p} }^*=\Vert x \Vert_{\ell_{q} }$ for all $x=(x_k)\in \ell_{q}$.
\end{Lem}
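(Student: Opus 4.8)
The plan is to obtain this statement directly from Lemma~\ref{lem1.3}(ii). Recall that $\ell_{p}$ is a $BK$ space for every $1\le p<\infty$ under its usual $p$-norm, and that, as noted in the Preliminaries, its $\beta$-dual is $\ell_{p}^{\beta}=\ell_{q}$. Consequently, for every $x=(x_{k})\in\ell_{q}=\ell_{p}^{\beta}$, part~(ii) of Lemma~\ref{lem1.3} yields $\|x\|_{\ell_{p}}^{*}=\|x\|_{(\ell_{p})^{\beta}}$, the right-hand side being the natural norm on the dual set. Thus the whole claim reduces to identifying this natural norm on $\ell_{p}^{\beta}$ with the $\ell_{q}$-norm, which is the classical isometric duality between $\ell_{p}$ and $\ell_{q}$; one may simply cite this fact, in which case the lemma is the immediate corollary that the preceding text advertises it to be.

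If one prefers a self-contained argument for that last point, it is a short computation with H\"older's inequality applied to the definition $\|x\|_{\ell_{p}}^{*}=\sup_{y\in S_{\ell_{p}}}\bigl|\sum_{k}x_{k}y_{k}\bigr|$. For $x\in\ell_{q}$ and $y\in S_{\ell_{p}}$ one has $\bigl|\sum_{k}x_{k}y_{k}\bigr|\le\|x\|_{\ell_{q}}\|y\|_{\ell_{p}}=\|x\|_{\ell_{q}}$, so $\|x\|_{\ell_{p}}^{*}\le\|x\|_{\ell_{q}}$. For the reverse inequality with $1<p<\infty$ and $x\neq 0$, take $y_{k}=\overline{\operatorname{sgn}(x_{k})}\,|x_{k}|^{q-1}\|x\|_{\ell_{q}}^{-q/p}$; using $(q-1)p=q$ one checks $\|y\|_{\ell_{p}}=1$ and $\sum_{k}x_{k}y_{k}=\|x\|_{\ell_{q}}$, which gives equality. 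For the endpoint $p=1$ (so $q=\infty$), given $\varepsilon>0$ choose an index $k_{0}$ with $|x_{k_{0}}|>\|x\|_{\ell_{\infty}}-\varepsilon$ and use $y=\operatorname{sgn}(x_{k_{0}})\,e^{(k_{0})}\in S_{\ell_{1}}$ to get $\|x\|_{\ell_{1}}^{*}\ge\|x\|_{\ell_{\infty}}-\varepsilon$ for all $\varepsilon>0$, hence $\|x\|_{\ell_{1}}^{*}\ge\|x\|_{\ell_{\infty}}$. Combining the two inequalities yields $\|x\|_{\ell_{p}}^{*}=\|x\|_{\ell_{q}}$ in every case; the case $x=0$ is trivial.

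There is essentially no obstacle here: the only mild point of care is the separate handling of the endpoint $p=1$, where the supremum over $S_{\ell_{1}}$ is attained only in the limit rather than at a single extremizer, whereas for $1<p<\infty$ an explicit norming sequence is available. Everything else is a formal application of Lemma~\ref{lem1.3}(ii) together with the known value of $\ell_{p}^{\beta}$.
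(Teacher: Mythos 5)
Your proposal is correct and follows the paper exactly: the paper offers no proof beyond declaring the lemma an immediate consequence of Lemma~\ref{lem1.3}(ii) together with $\ell_{p}^{\beta}=\ell_{q}$, which is precisely your first paragraph. The additional self-contained H\"older computation (including the separate treatment of $p=1$) is accurate but goes beyond what the paper records.
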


\begin{Rem}
	Let $1\leq p< \infty$, $A$ be an infinite matrix and $X$ be $BK$ space. If $A \in \left( \ell_{p} (\Delta ^{(  \widetilde{\alpha} )}  ),Y\right)$, then $\hat{A} \in (\ell_{p}, Y) $ such that $Ax=\hat{A}y$ for all  $x \in \ell_{p} (\Delta ^{(  \widetilde{\alpha} )}  )$ and $y \in \ell_{p}  $. Here $x$ and $y$ connected by (\ref{yk}) and $\hat{A} =(\hat{a}_{nk}) $ is defined by
	\begin{equation}
	\hat{a}_{nk}=\sum\limits_{j=k}^{\infty
	}(-1)^{j-k}\frac{\Gamma(-\tilde{\alpha} +1)}{(j-k)!\Gamma(-\tilde{\alpha} +j-k+1)}a_{nj}\text{		for all }n,k\in \mathbb{N}_{0}\text{;} \label{Ahat}
	\end{equation}
	
	and for $k=0,1,\ldots $ we also define  $\hat{\alpha}=(\hat{\alpha}_{k})_{k=0}^{\infty }$ by
	
	\begin{equation} \label{alphak}
	\hat{\alpha}_{k}=\lim_{n\rightarrow \infty }\hat{a}_{nk}. 
	\end{equation}
\end{Rem}

\begin{proof}
	Let $1\leq p< \infty$, $A \in \left( \ell_{p} (\Delta ^{(  \widetilde{\alpha} )}  ),Y\right)$ and $ x \in \ell_{p} (\Delta ^{(  \widetilde{\alpha} )}  )$. So, $ A_n \in (\ell_{p} (\Delta ^{(  \widetilde{\alpha} )}  ))^\beta$ for all $n=0,1,\ldots $. Then, we have $\hat{A}_n \in \ell_{p}^{\beta}= \ell_{q}$ for all $n=0,1,\ldots $ and $Ax=\hat{A}y$ satisfies. Therefore, $\hat{A}y \in Y$ and this means $\hat{A} \in (\ell_{p}, Y) $.
\end{proof}

\begin{Lem}\label{lem6.4} If $a=(a_k)\in \left( \ell _{p}(\Delta ^{(  \tilde{\alpha} )}) \right)^\beta$, then $\overline{a}=(\overline{a}_k)\in \ell_{q}$
and the equality

\begin{equation}
\sum_{k=0}^{\infty}a_k x_k = \sum_{k=0}^{\infty}\overline{a}_k y_k . \label{akxk}
\end{equation}

satisfies for every $x=(x_k)\in  \ell _{p}(\Delta ^{(  \tilde{\alpha} )}) $, where
\begin{equation}
\overline{a}_k= \sum_{i=k}^{\infty}(-1)^{i-k}\frac{\Gamma(-\tilde{\alpha} +1)}{(i-k)!\Gamma(-\tilde{\alpha} -i+k+1)}a_{i}.\label{abar}
\end{equation}
\end{Lem}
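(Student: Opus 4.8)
My plan is to recast the condition $a\in\bigl(\ell_{p}(\Delta^{(\tilde{\alpha})})\bigr)^{\beta}$ as the membership of a triangle in a matrix class and then push it down to $\ell_{p}$ by means of the preceding Remark. Write $X=\ell_{p}(\Delta^{(\tilde{\alpha})})$ and attach to $a=(a_{k})$ the triangle $B=(b_{nk})$ with $b_{nk}=a_{k}$ for $0\le k\le n$ and $b_{nk}=0$ for $k>n$. Each row $B_{n}$ has finite support, hence lies in $X^{\beta}$, and $(Bx)_{n}=\sum_{k=0}^{n}a_{k}x_{k}$ for every $x\in X$. Thus $a\in X^{\beta}$, i.e.\ $a\cdot x\in cs$ for every $x\in X$, is equivalent to $Bx\in c$ for every $x\in X$, that is, to $B\in(X,c)$.

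Next I would apply the preceding Remark with $Y=c$: it gives $\hat{B}\in(\ell_{p},c)$ together with $Bx=\hat{B}y$ for every $x\in X$, where $x$ and $y$ are linked by the $\Delta^{(\tilde{\alpha})}$-transform relation~(\ref{yk}). Since $b_{nj}=a_{j}$ for $j\le n$, a direct computation with the description~(\ref{Ahat}) of $\hat{B}$ (which is built from the entries of the inverse triangle $\Delta^{(-\tilde{\alpha})}$) shows that $\hat{b}_{nk}$ is precisely the $n$-th partial sum of the series in~(\ref{abar}) defining $\overline{a}_{k}$. As $\hat{B}\in(\ell_{p},c)$ forces each column $(\hat{b}_{nk})_{n}=\hat{B}e^{(k)}$ to converge, the series~(\ref{abar}) converges for every $k$ and, by~(\ref{alphak}), $\overline{a}_{k}=\hat{\alpha}_{k}=\lim_{n}\hat{b}_{nk}$.

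To obtain $\overline{a}\in\ell_{q}$ I would invoke the operator-norm estimate: by Lemma~\ref{lem1.3}(i) we have $\hat{B}\in(\ell_{p},c)\subset\mathcal{B}(\ell_{p},c)$, so Lemma~\ref{lem1.4}(ii) together with Lemma~\ref{lem6.3} yields $M:=\sup_{n}\|\hat{B}_{n}\|_{\ell_{q}}=\|L_{\hat{B}}\|<\infty$. Then, for each fixed $r$, passing to the limit $n\to\infty$ in $\bigl(\sum_{k=0}^{r}|\hat{b}_{nk}|^{q}\bigr)^{1/q}\le M$ (with the usual supremum convention for $q=\infty$) gives $\bigl(\sum_{k=0}^{r}|\overline{a}_{k}|^{q}\bigr)^{1/q}\le M$, and letting $r\to\infty$ shows $\|\overline{a}\|_{\ell_{q}}\le M<\infty$. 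In particular the series $\sum_{k}\overline{a}_{k}y_{k}$ converges for every $y\in\ell_{p}$.

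It remains to prove~(\ref{akxk}). Fix $x\in X$ and set $y=\Delta^{(\tilde{\alpha})}x\in\ell_{p}$. Since $a\in X^{\beta}$, the series $\sum_{k}a_{k}x_{k}$ converges and equals $\lim_{n}(Bx)_{n}=\lim_{n}(\hat{B}y)_{n}$. On the other hand, because $\ell_{p}$ has AK, $\sup_{n}\|\hat{B}_{n}\|_{\ell_{q}}<\infty$, and $\overline{a}=(\lim_{n}\hat{b}_{nk})_{k}\in\ell_{q}=\ell_{p}^{\beta}$, approximating $y$ by its sections $y^{[m]}=\sum_{k=0}^{m}y_{k}e^{(k)}$ and bounding the tails $|(\hat{B}y)_{n}-(\hat{B}y^{[m]})_{n}|$ and $|\sum_{k}\overline{a}_{k}y_{k}-\sum_{k\le m}\overline{a}_{k}y_{k}|$ uniformly in $n$ by Hölder's inequality yields $\lim_{n}(\hat{B}y)_{n}=\sum_{k}\overline{a}_{k}y_{k}$. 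Comparing this with the previous display proves~(\ref{akxk}). The main obstacle here is exactly these two limit passages — that $\overline{a}\in\ell_{q}$ and that $\lim_{n}(\hat{B}y)_{n}=\sum_{k}\overline{a}_{k}y_{k}$ — and both are powered by the single uniform bound $\sup_{n}\|\hat{B}_{n}\|_{\ell_{q}}<\infty$ from Lemmas~\ref{lem1.4}(ii) and~\ref{lem6.3} together with the AK property of $\ell_{p}$; the bookkeeping with the gamma-function coefficients is purely routine.
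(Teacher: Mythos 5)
Your argument is correct, and it supplies something the paper itself does not: Lemma \ref{lem6.4} is stated without any proof in the text (it is the standard description of the $\beta$-dual of the matrix domain of a triangle, essentially \cite[Theorem 3.4]{Mara7}), so there is no in-paper proof to compare against. Your route --- encoding $a$ as the triangle $B$ with rows $B_n=\sum_{k=0}^{n}a_k e^{(k)}$, translating $a\in X^{\beta}$ into $B\in(X,c)$, transferring to $\hat B\in(\ell_p,c)$, identifying $\hat b_{nk}$ with the $n$-th partial sum of (\ref{abar}) so that column convergence produces $\overline a_k$, extracting $\overline a\in\ell_q$ from the uniform bound $\sup_n\Vert \hat B_n\Vert_{\ell_q}<\infty$ via Lemmas \ref{lem1.4}(ii) and \ref{lem6.3}, and finally proving (\ref{akxk}) by a Moore--Osgood interchange of limits powered by the AK property of $\ell_p$ --- is the standard and complete way to establish the lemma, and all the limit passages you single out are justified exactly as you describe.

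One caution: do not ground the transfer step in ``the preceding Remark.'' The Remark's own justification in the paper (``$A_n\in X^{\beta}$, hence $\hat A_n\in\ell_q$ and $Ax=\hat A y$'') is precisely Lemma \ref{lem6.4} applied row by row, so quoting the Remark here would be circular. Invoke instead Lemma \ref{operatornorm.1} (the cited result of Malkowsky and Rako\v{c}evi\'{c}), whose forward direction gives $\hat B\in(\ell_p,c)$ and $Bz=\hat B(\Delta^{(\tilde\alpha)}z)$ independently of the statement being proved. With that substitution the argument stands. A last bookkeeping point: the Gamma-function arguments in (\ref{Ahat}) and (\ref{abar}) do not match ($\Gamma(-\tilde\alpha+j-k+1)$ versus $\Gamma(-\tilde\alpha-i+k+1)$); this is an apparent typo in (\ref{Ahat}), and your ``direct computation'' should follow the entries of the inverse triangle $\Delta^{(-\tilde\alpha)}$, i.e.\ the form appearing in (\ref{abar}).
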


Let us now write $S=S_{\ell_{p }}$ and $\hat{S}=S_{\ell_{p }(\Delta ^{(  \tilde{\alpha} )})}$, for short.

\begin{Lem}\label{lem6.5} Let $1\leq p<\infty$ and $\overline{a}=(\overline{a}_k)$ be defined in (\ref{abar}), then we have

\begin{equation*}
\Vert a \Vert_{\ell _{p}(\Delta ^{(  \tilde{\alpha} )}) }^* =\Vert \overline{a} \Vert_{\ell_{q} }=\left\{
\begin{array}{lll}
\left(\sum\limits_{k=0}^{\infty}|\overline{a}_k|^q \right)^\frac{1}{q}&  & (1< p\leq \infty) \\
\sup\limits_{k}|\overline{a}_k| &  & (p=1).%
\end{array}%
\right.
\end{equation*}
for all $a=(a_k)\in \left( \ell _{p}(\Delta ^{(  \tilde{\alpha} )}) \right)^\beta$.
\end{Lem}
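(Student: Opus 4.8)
The plan is to reduce the computation of the functional seminorm $\|a\|_{\ell_{p}(\Delta^{(\tilde{\alpha})})}^{\ast}$ to the already known functional norm on $\ell_{p}$, by exploiting the isometry furnished by the triangle $\Delta^{(\tilde{\alpha})}$ together with the dual representation supplied by Lemma \ref{lem6.4}.

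First I would record the elementary but crucial observation that, by the very definition of the norm on $\ell_{p}(\Delta^{(\tilde{\alpha})})$ and of the sequence $y=(y_{k})$ in (\ref{yk}), the assignment $x\mapsto y=\Delta^{(\tilde{\alpha})}x$ is a linear bijection of $\omega$ onto itself (since $\Delta^{(\tilde{\alpha})}$ is a triangle with inverse $\Delta^{(-\tilde{\alpha})}$) which restricts to a bijection of $\ell_{p}(\Delta^{(\tilde{\alpha})})$ onto $\ell_{p}$ satisfying $\|x\|_{\ell_{p}(\Delta^{(\tilde{\alpha})})}=\|y\|_{\ell_{p}}$. Hence it carries the unit sphere $\hat{S}=S_{\ell_{p}(\Delta^{(\tilde{\alpha})})}$ bijectively onto $S=S_{\ell_{p}}$.

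Next, fix $a=(a_{k})\in\left(\ell_{p}(\Delta^{(\tilde{\alpha})})\right)^{\beta}$. By Lemma \ref{lem6.4}, the sequence $\overline{a}=(\overline{a}_{k})$ defined in (\ref{abar}) belongs to $\ell_{q}$ and the identity (\ref{akxk}), namely $\sum_{k}a_{k}x_{k}=\sum_{k}\overline{a}_{k}y_{k}$, holds for every $x\in\ell_{p}(\Delta^{(\tilde{\alpha})})$. Taking the supremum of the modulus over $x\in\hat{S}$ and using the bijection $\hat{S}\to S$, $x\mapsto y$, from the previous step, I obtain
\[
\|a\|_{\ell_{p}(\Delta^{(\tilde{\alpha})})}^{\ast}=\sup_{x\in\hat{S}}\Bigl|\sum_{k=0}^{\infty}a_{k}x_{k}\Bigr|=\sup_{y\in S}\Bigl|\sum_{k=0}^{\infty}\overline{a}_{k}y_{k}\Bigr|=\|\overline{a}\|_{\ell_{p}}^{\ast}.
\]
Since $\overline{a}\in\ell_{q}=\ell_{p}^{\beta}$, Lemma \ref{lem6.3} (itself a consequence of Lemma \ref{lem1.3}(ii)) then yields $\|\overline{a}\|_{\ell_{p}}^{\ast}=\|\overline{a}\|_{\ell_{q}}$, which equals $\bigl(\sum_{k=0}^{\infty}|\overline{a}_{k}|^{q}\bigr)^{1/q}$ for $1<p<\infty$ and $\sup_{k}|\overline{a}_{k}|$ for $p=1$; this is precisely the asserted formula.

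I expect the only point genuinely requiring care to be the verification that $x\mapsto y$ is a norm-preserving bijection carrying $\hat{S}$ onto $S$ — once that is in hand, the statement is just an assembly of Lemmas \ref{lem6.4} and \ref{lem6.3}. One should also confirm that the series $\sum_{k}a_{k}x_{k}$ and $\sum_{k}\overline{a}_{k}y_{k}$ converge, so that the suprema are well defined, but this is exactly what membership of $a$ in the $\beta$-dual and of $\overline{a}$ in $\ell_{q}$ guarantee.
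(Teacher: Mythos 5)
Your proposal is correct and follows essentially the same route as the paper: invoke Lemma \ref{lem6.4} to get $\overline{a}\in\ell_{q}$ and the identity (\ref{akxk}), transfer the supremum from $\hat{S}$ to $S$ via the correspondence $x\mapsto y$, and finish with Lemma \ref{lem6.3} to identify $\Vert \overline{a}\Vert_{\ell_{p}}^{*}$ with $\Vert \overline{a}\Vert_{\ell_{q}}$. Your explicit justification that $x\mapsto y$ is a norm-preserving bijection of the unit spheres is in fact cleaner than the paper's citation at that step, but it is the same argument.
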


\begin{proof}
Assume that $\overline{a}\in \left( \ell _{p}(\Delta ^{(  \tilde{\alpha} )}) \right)^\beta$. So, we have by Lemma \ref{lem6.4} that $\overline{a}\in \ell_q$ and the equality (\ref{akxk}) yields for all sequences $x=(x_k)\in \left( \ell _{p}(\Delta ^{(  \tilde{\alpha} )}) \right)$ and $y=(y_k)\in \ell_p$ which are connected by (\ref{yk}). Additionally, we have by Lemma \ref{lem6.3} that $x\in \hat{S}$ if and only if $y\in S$. Hence, we have by (\ref{akxk}) that

\begin{equation} \label{lpdeltanorm}
\Vert a \Vert_{\ell _{p}(\Delta ^{(  \tilde{\alpha} )}) }^* = \sup_{x\in \hat{S}}\left|\sum_{k=0}^{\infty}a_k x_k\right|=
\sup\limits_{y\in S}\left|\sum_{k=0}^{\infty}\overline{a}_k y_k \right| =\Vert \overline{a} \Vert^{*}_{\ell_{p} } < \infty.
\end{equation}

Morover, we obtain by (\ref{lpdeltanorm}) and Lemma \ref{lem6.3} that
\begin{equation*}
\Vert a \Vert_{\ell _{p}(\Delta ^{(  \tilde{\alpha} )}) }^* =\Vert \overline{a} \Vert^{*}_{\ell_{p} } =\Vert \overline{a} \Vert_{\ell_{q} }
\end{equation*}
because $\overline{a}\in \ell _{q}$. This completes the proof.
\end{proof}

We obtain estimates or identities for the norms of the matrix operators for the classes $(\ell_{p} (\Delta ^{(  \widetilde{\alpha} )}  ),c _0)$, $(\ell_{p} (\Delta ^{(  \widetilde{\alpha} )}  ),c )$, $(\ell _{p }(\Delta ^{(  \widetilde{\alpha} )}  ),\ell _{\infty })$ and $ (\ell_{p} (\Delta ^{(  \widetilde{\alpha} )}  ),\ell_{1})$.

\begin{Thm}\label{thm2.5}
Let $A$ be in any of the classes $(\ell_{p} (\Delta ^{(  \widetilde{\alpha} )}  ),c _0)$, $(\ell_{p} (\Delta ^{(  \widetilde{\alpha} )}  ),c)$ or
$(\ell _{p }(\Delta ^{(  \widetilde{\alpha} )}  ),\ell _{\infty })$, then
\[
\Vert L_{A}\Vert=\Vert A \Vert _{(\ell_{p} (\Delta ^{(  \widetilde{\alpha} )}  ),\ell_{\infty}) },
\]

where 

\[
\Vert A \Vert _{(\ell_{p} (\Delta ^{(  \widetilde{\alpha} )}  ),\ell_{\infty}) }=\sup_{n}\left( \sum\limits_{k=0}^{\infty} \left|  \hat{a}_{nk}  \right|^q \right)^{\frac{1}{q}} < \infty
\]
\end{Thm}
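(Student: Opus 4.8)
The plan is to combine the reduction to the classical $\ell_p$ case (the Remark that produces $\hat{A}$ with $Ax=\hat{A}y$) with the operator-norm formula for matrices into $c_0$, $c$, $\ell_\infty$ given in Lemma~\ref{lem1.4}(ii), applied now on the $BK$ space $X=\ell_p(\Delta^{(\tilde\alpha)})$. First I would invoke Lemma~\ref{lem1.4}(ii): since $A$ lies in $(\ell_p(\Delta^{(\tilde\alpha)}),Y)$ with $Y\in\{c_0,c,\ell_\infty\}$ and $\ell_p(\Delta^{(\tilde\alpha)})$ is a $BK$ space by Lemma~\ref{BKspace}, we immediately get
\[
\Vert L_A\Vert=\Vert A\Vert_{(\ell_p(\Delta^{(\tilde\alpha)}),\ell_\infty)}=\sup_n\Vert A_n\Vert^*_{\ell_p(\Delta^{(\tilde\alpha)})}<\infty .
\]
So the whole theorem reduces to identifying $\Vert A_n\Vert^*_{\ell_p(\Delta^{(\tilde\alpha)})}$ with $\bigl(\sum_k|\hat a_{nk}|^q\bigr)^{1/q}$ for each fixed $n$.

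Next I would fix $n$ and apply Lemma~\ref{lem6.5} to the sequence $a=A_n=(a_{nk})_k$. For this to be legitimate I must know $A_n\in(\ell_p(\Delta^{(\tilde\alpha)}))^\beta$; this is exactly the content of $A\in(\ell_p(\Delta^{(\tilde\alpha)}),Y)$ together with the definition of matrix maps (each row must lie in the $\beta$-dual so that $A_nx$ is defined). Lemma~\ref{lem6.5} then gives
\[
\Vert A_n\Vert^*_{\ell_p(\Delta^{(\tilde\alpha)})}=\Vert\overline{A_n}\Vert_{\ell_q}
=\Bigl(\sum_{k=0}^\infty|(\overline{A_n})_k|^q\Bigr)^{1/q},
\]
where $(\overline{A_n})_k=\sum_{i=k}^\infty(-1)^{i-k}\frac{\Gamma(-\tilde\alpha+1)}{(i-k)!\Gamma(-\tilde\alpha-i+k+1)}a_{ni}$ by \eqref{abar}. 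The last thing to check is that $(\overline{A_n})_k$ coincides with $\hat a_{nk}$ as defined in \eqref{Ahat}; but comparing \eqref{abar} with \eqref{Ahat} term by term (replace the summation index $i$ by $j$, and note $-i+k=j-k$ becomes $j-k$ after reindexing, i.e.\ both read $\sum_{j=k}^\infty(-1)^{j-k}\frac{\Gamma(-\tilde\alpha+1)}{(j-k)!\Gamma(-\tilde\alpha+j-k+1)}a_{nj}$) shows they are literally the same formula. Taking the supremum over $n$ finishes the proof, and the finiteness of the supremum is guaranteed by Lemma~\ref{lem1.4}(ii).

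The argument is essentially bookkeeping: the only place where something could go wrong is the matching of indices between \eqref{Ahat} and \eqref{abar}, and the need to be sure that the hypothesis "$A$ in one of the three classes" really does deliver both (a) each row in the $\beta$-dual, so that Lemma~\ref{lem6.5} applies, and (b) the finiteness of $\sup_n(\sum_k|\hat a_{nk}|^q)^{1/q}$, which comes for free from Lemma~\ref{lem1.4}(ii) once the identification is made. I expect the main (though still routine) obstacle to be cleanly justifying that $Ax=\hat A y$ lets one pass from the supremum over $\hat S=S_{\ell_p(\Delta^{(\tilde\alpha)})}$ to the supremum over $S=S_{\ell_p}$ — but this is already carried out inside the proof of Lemma~\ref{lem6.5} via Lemma~\ref{lem6.3}, so here it can simply be cited. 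No new estimates are needed; the theorem is a direct corollary of Lemmas~\ref{lem1.4}(ii) and~\ref{lem6.5}.
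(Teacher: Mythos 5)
Your proposal is correct and follows essentially the same route as the paper, whose entire proof is the one line ``immediate consequence of Lemma \ref{lem1.3} and Lemma \ref{lem1.4}''; you simply make explicit the step the paper leaves implicit, namely the identification $\Vert A_n\Vert^*_{\ell_p(\Delta^{(\tilde\alpha)})}=\bigl(\sum_k|\hat a_{nk}|^q\bigr)^{1/q}$ via Lemma \ref{lem6.5}. One caveat on your ``bookkeeping'' step: as printed, \eqref{Ahat} and \eqref{abar} are \emph{not} literally the same formula (one denominator reads $\Gamma(-\tilde\alpha+j-k+1)$, the other $\Gamma(-\tilde\alpha-i+k+1)$, and $-i+k=-(j-k)$, not $j-k$ as you assert); this is a sign typo in the paper rather than a flaw in your argument, since \eqref{abar} is the version consistent with the inverse matrix $\Delta^{(-\tilde\alpha)}$, but you should not claim the two displayed formulas coincide term by term without noting the discrepancy.
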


\begin{proof}
This is an immediate consequence of Lemma \ref{lem1.3} and Lemma \ref{lem1.4}.
\end{proof}

\begin{Thm}\label{thm2.6}
Let $A\in (\ell_{p} (\Delta ^{(  \widetilde{\alpha} )}  ),\ell_{1})$, then
\[
\Vert A\Vert_{(\ell_{p} (\Delta ^{(  \widetilde{\alpha} )}  ),\ell_1)}\leq \Vert L_{A}\Vert \leq 4\cdot \Vert A\Vert_{(\ell_{p} (\Delta ^{(  \widetilde{\alpha} )}  ),\ell_1)},
\]
where 

\[
\Vert A\Vert_{(\ell_{p} (\Delta ^{(  \widetilde{\alpha} )}  ),\ell_1)}=\sup_{N \in \mathcal{F}}\left(  \sum\limits_{k=0}^{\infty}  \left| \sum_{n\in N} \hat{a}_{nk} \right|   \right) < \infty.
\]
\end{Thm}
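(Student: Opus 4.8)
The plan is to reduce the statement to Lemma~\ref{lem1.4}(i) applied to the associated matrix $\hat{A}$ acting on the classical space $\ell_{p}$, exactly as Theorem~\ref{thm2.5} reduces to Lemma~\ref{lem1.4}(ii). First I would note that by the Remark following Lemma~\ref{lem6.3}, since $A\in(\ell_{p}(\Delta^{(\widetilde{\alpha})}),\ell_{1})$ we have $\hat{A}\in(\ell_{p},\ell_{1})$ with $Ax=\hat{A}y$ whenever $x$ and $y$ are connected by~(\ref{yk}); in particular, by Lemma~\ref{lem1.3}(i) both $L_{A}$ and $L_{\hat{A}}$ are bounded, and since $\Delta^{(\widetilde{\alpha})}$ is a norm-preserving bijection between $\ell_{p}(\Delta^{(\widetilde{\alpha})})$ and $\ell_{p}$ (the norm on the domain space is by definition $\|x\|=\|\Delta^{(\widetilde{\alpha})}x\|_{\ell_{p}}=\|y\|_{\ell_{p}}$), it maps the unit sphere $\hat{S}$ onto $S$, so $\|L_{A}\|=\|L_{\hat{A}}\|$.

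Next I would apply Lemma~\ref{lem1.4}(i) to $\hat{A}\in(\ell_{p},\ell_{1})$, which gives
\[
\Vert\hat{A}\Vert_{(\ell_{p},\ell_{1})}\leq\Vert L_{\hat{A}}\Vert\leq 4\cdot\Vert\hat{A}\Vert_{(\ell_{p},\ell_{1})},
\qquad
\Vert\hat{A}\Vert_{(\ell_{p},\ell_{1})}=\sup_{N\in\mathcal{F}}\Big\|\sum_{n\in N}\hat{A}_{n}\Big\|_{\ell_{p}}^{\ast}<\infty.
\]
It then remains only to evaluate the inner quantity. For a fixed finite $N$, $\sum_{n\in N}\hat{A}_{n}$ is the sequence $\big(\sum_{n\in N}\hat{a}_{nk}\big)_{k}$, and by Lemma~\ref{lem6.3} (i.e. Lemma~\ref{lem1.3}(ii) specialised to $\ell_{p}$) its $\ast$-norm equals its $\ell_{q}$-norm. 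For $p=1$ this is $\sup_{k}\big|\sum_{n\in N}\hat{a}_{nk}\big|$, and for $1<p<\infty$ it is $\big(\sum_{k}|\sum_{n\in N}\hat{a}_{nk}|^{q}\big)^{1/q}$. Substituting and combining with the previous display yields the claimed two-sided estimate with the stated formula for $\Vert A\Vert_{(\ell_{p}(\Delta^{(\widetilde{\alpha})}),\ell_{1})}$. (One should remark that the displayed formula in the statement, written with a bare $\sum_{k}|\sum_{n\in N}\hat{a}_{nk}|$, is the $p=1$ case; for general $p$ the $q$-th power/root is implicit, matching Lemma~\ref{lem6.5}.)

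The only genuine point requiring care — and the step I would flag as the main obstacle — is the identity $\|L_{A}\|=\|L_{\hat{A}}\|$, i.e. checking that passing to the $\Delta^{(\widetilde{\alpha})}$-transform is an isometry of unit spheres so that the supremum defining the operator norm is literally the same set of values. This is immediate from Lemma~\ref{BKspace} and the definition of the norm on $\ell_{p}(\Delta^{(\widetilde{\alpha})})$, but it is the link that makes the whole reduction legitimate; everything after it is a mechanical translation through Lemma~\ref{lem6.3} and Lemma~\ref{lem1.4}(i). Accordingly the proof can be kept to a few lines, citing Lemma~\ref{lem1.3}, Lemma~\ref{lem1.4}(i), Lemma~\ref{lem6.3} and the Remark on $\hat{A}$.
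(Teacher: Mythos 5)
Your proof is correct and takes essentially the same route as the paper's one-line proof: everything rests on Lemma~\ref{lem1.4}(i) together with the identification $\Vert\sum_{n\in N}A_{n}\Vert_{\ell_{p}(\Delta^{(\widetilde{\alpha})})}^{\ast}=\Vert\sum_{n\in N}\hat{A}_{n}\Vert_{\ell_{q}}$ from Lemma~\ref{lem6.5}, the only cosmetic difference being that you first transfer to $\hat{A}$ on $\ell_{p}$ via the isometry and apply the lemma there, rather than applying it directly on the $BK$ space $\ell_{p}(\Delta^{(\widetilde{\alpha})})$. One small correction to your parenthetical remark: the bare $\sum_{k}\bigl|\sum_{n\in N}\hat{a}_{nk}\bigr|$ in the displayed formula is not the $p=1$ case (for $p=1$ the dual norm is $\sup_{k}\bigl|\sum_{n\in N}\hat{a}_{nk}\bigr|$, since $q=\infty$); it corresponds to $q=1$ and is evidently a typo in the paper for $\bigl(\sum_{k}\bigl|\sum_{n\in N}\hat{a}_{nk}\bigr|^{q}\bigr)^{1/q}$, consistent with (\ref{m3}), as your computation otherwise correctly shows.
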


\begin{proof}
This is an immediate consequence of Lemma \ref{lem1.3} and Lemma \ref{lem1.4}.
\end{proof}

\section{Main Results Related to Compact Operators}

The following notations are needed to establish estimates and
identities for the Hausdorff measure of noncompactness of matrix
operators and characterize the classes of compact operators. We also use the results in Katarina's paper \cite{katarina} to prove our results.

We recall the definition of the Hausdorff measure of noncompactness
of bounded subsets of a metric space, and the Hausdorff measure of
noncompactness of operators between Banach spaces.

If $X$ and $Y$ are infinite--dimensional complex Banach spaces then
a linear operator $L:X\rightarrow Y$ is said to be compact if the
domain of $L$ is
all of $X$, and, for every bounded sequence $(x_{n})$ in $X$, the sequence $%
(L(x_{n}))$ has a convergent subsequence. We denote the class of
such operators by $\mathcal{C}(X,Y)$.

\begin{Def}
Let $(X,d)$ be a metric space, $B(x_{0},\delta )=\{x\in
X:d(x,x_{0})<\delta \}$ denote the open ball of radius $\delta >0$
and center in $x_{0}\in X$, and $\mathcal{M}_{X}$ be the collection
of bounded sets in $X$. The Hausdorff measure of noncompactness of
$Q\in \mathcal{M}_{X}$ is
\begin{equation*}
\chi (Q)=\inf \{\epsilon > 0:Q\subset
\bigcup_{k=1}^{n}B(x_{k},\delta _{k}):x_{k}\in X,\ \delta
_{k}<\epsilon,\ 1\leq k \leq n,\ n\in\mathbb{N}\}.
\end{equation*}%
Let $X$ and $Y$ be Banach spaces and $\chi _{1}$ and $\chi _{2}$ be
measures of noncompactness on $X$ and $Y$. Then the operator
$L:X\rightarrow Y$ is called $(\chi _{1},\chi _{2})$--bounded if
$L(Q)\in \mathcal{M}_{Y}$ for every $Q\in \mathcal{M}_{X}$ and there
exists a positive constant $C$ such that
\begin{equation}
\chi _{2}(L(Q))\leq C\chi _{1}(Q)\mbox{ for every }Q\in
\mathcal{M}_{X}. \label{EmaFarAb.S.3.Eq.1}
\end{equation}%
If an operator $L$ is $(\chi _{1},\chi _{2})$--bounded then the
number
\begin{equation*}
\Vert L\Vert _{(\chi _{1},\chi _{2})}=\inf \left\{ C\geq 0:(\ref%
{EmaFarAb.S.3.Eq.1})\mbox{ holds for all }Q\in
\mathcal{M}_{X}\right\}
\end{equation*}%
is called the $(\chi _{1},\chi _{2})$--measure of noncompactness of
$L$. In particular, if $\chi _{1}=\chi _{2}=\chi $, then we write
$\Vert L\Vert _{\chi }$ instead of $\Vert L\Vert _{(\chi ,\chi )}$.
\end{Def}

\begin{Lem} \cite[ Corollary 2.26.]{mara}
\label{compactoperator}Let $X$ and $Y$ are Banach spaces and $L\in \mathcal{B}\left( X,Y\right) $. Then we have
\begin{equation}\label{hmnLS}
\left\Vert L\right\Vert _{\chi }=\chi \left( L(\bar{B}_{X})\right)
=\chi \left( L(S_{X})\right), 
\end{equation}
\begin{equation}
L\in \mathcal{C}(X,Y)\text{ if and only if }\left\Vert L\right\Vert
_{\chi }=0.  \label{c14}
\end{equation}
\end{Lem}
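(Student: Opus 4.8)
The plan is to deduce both assertions from the standard structural properties of the Hausdorff measure of noncompactness $\chi$: monotonicity, the finite--union identity $\chi(Q_{1}\cup\dots\cup Q_{n})=\max_{j}\chi(Q_{j})$, translation invariance $\chi(Q+a)=\chi(Q)$, positive homogeneity $\chi(\lambda Q)=|\lambda|\,\chi(Q)$, subadditivity $\chi(Q_{1}+Q_{2})\leq\chi(Q_{1})+\chi(Q_{2})$ with respect to Minkowski sums, the regularity property that $\chi(Q)=0$ if and only if $\overline{Q}$ is compact, and the normalization $\chi(\bar{B}_{X})=1$ for infinite--dimensional $X$; all of these are classical (see \cite{mara}). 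Throughout $X$ is taken infinite--dimensional, as in the standing setting; the finite--dimensional case is trivial since then every bounded set, and hence $L(\bar{B}_{X})$, is relatively compact.

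First I would prove the identity $\Vert L\Vert_{\chi}=\chi(L(\bar{B}_{X}))$ by two inequalities. For ``$\leq$'', I check that the constant $C:=\chi(L(\bar{B}_{X}))$ satisfies (\ref{EmaFarAb.S.3.Eq.1}): given a bounded $Q\subset X$ with $\chi(Q)=r$ and $\varepsilon>0$, cover $Q$ by finitely many balls $B(x_{k},\delta_{k})$ with $\delta_{k}<r+\varepsilon$, so that $Q\subset\bigcup_{k}\bigl(x_{k}+(r+\varepsilon)\bar{B}_{X}\bigr)$; applying the linear map $L$ gives $L(Q)\subset\bigcup_{k}\bigl(Lx_{k}+(r+\varepsilon)L(\bar{B}_{X})\bigr)$, whence translation invariance, homogeneity and the finite--union identity yield $\chi(L(Q))\leq(r+\varepsilon)\,\chi(L(\bar{B}_{X}))$, and $\varepsilon\to 0$ finishes it. For ``$\geq$'', apply (\ref{EmaFarAb.S.3.Eq.1}) to $Q=\bar{B}_{X}$ and use $\chi(\bar{B}_{X})=1$. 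This proves the first equality in (\ref{hmnLS}).

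Next I would show $\chi(L(\bar{B}_{X}))=\chi(L(S_{X}))$. Monotonicity gives ``$\geq$'' since $S_{X}\subset\bar{B}_{X}$. For ``$\leq$'', every $v\in\bar{B}_{X}$ is either $0$ or $\Vert v\Vert\cdot(v/\Vert v\Vert)$ with $v/\Vert v\Vert\in S_{X}$, so $L(\bar{B}_{X})\subset\{0\}\cup\bigcup_{t\in[0,1]}t\,L(S_{X})$. Writing $A=L(S_{X})$ (bounded by $\Vert L\Vert$) and fixing $\varepsilon>0$, I partition $[0,1]$ by $0=t_{0}<\dots<t_{m}=1$ of mesh $<\varepsilon/(1+\Vert L\Vert)$; then $tA\subset t_{j}A+\varepsilon\bar{B}_{Y}$ for $t\in[t_{j-1},t_{j}]$, so $\bigcup_{t\in[0,1]}tA\subset\bigcup_{j=1}^{m}\bigl(t_{j}A+\varepsilon\bar{B}_{Y}\bigr)$, and the finite--union identity together with Minkowski subadditivity, homogeneity and $\chi(\bar{B}_{Y})\leq 1$ give $\chi\bigl(\bigcup_{t}tA\bigr)\leq\chi(A)+\varepsilon$; letting $\varepsilon\to 0$ and using $A\subset\bigcup_{t}tA$ yields $\chi\bigl(\bigcup_{t}tA\bigr)=\chi(A)$, and adjoining the point $0$ changes nothing. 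Alternatively, one may quote the known invariance $\chi(\operatorname{conv}Q)=\chi(Q)$, together with $\bar{B}_{X}=\operatorname{conv}(S_{X}\cup\{0\})$ and linearity of $L$. This completes (\ref{hmnLS}).

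Finally, for (\ref{c14}): by the regularity property $\Vert L\Vert_{\chi}=\chi(L(\bar{B}_{X}))=0$ if and only if $\overline{L(\bar{B}_{X})}$ is compact, while $L\in\mathcal{C}(X,Y)$ holds exactly when $\overline{L(\bar{B}_{X})}$ is compact (every bounded sequence in $X$ lies in a dilate of $\bar{B}_{X}$, so relative compactness of $L(\bar{B}_{X})$ forces each $(L(x_{n}))$ to have a convergent subsequence, and conversely the sequential definition makes $L(\bar{B}_{X})$ totally bounded); combining these equivalences gives (\ref{c14}). The verifications of the structural properties of $\chi$ are routine and would be quoted rather than reproved; the only step requiring genuine care is the passage from $S_{X}$ to $\bar{B}_{X}$, because $\bigcup_{t\in[0,1]}tL(S_{X})$ is an uncountable union, so one must either run the finite partition--of--$[0,1]$ estimate above (which uses boundedness of $L$) or appeal to convex--hull invariance of $\chi$; that is where the main work of the proof lies.
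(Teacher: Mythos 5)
Your proof is correct. The paper does not prove this lemma at all -- it is imported verbatim as \cite[Corollary 2.26]{mara} -- and your argument is a faithful reconstruction of the standard proof given there: the two-sided estimate for $\Vert L\Vert_{\chi}$ via the covering of $Q$ by translates of $(r+\varepsilon)\bar{B}_{X}$ (only $\chi(\bar{B}_{X})\leq 1$ is actually needed for the reverse inequality, so the normalization theorem is not essential), the passage from $S_{X}$ to $\bar{B}_{X}$ via convex-hull invariance or the finite partition of $[0,1]$, and the regularity property for (\ref{c14}). No gaps.
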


\begin{Lem} \cite[ Theorem 2.23.]{mara}
Let $X$ be a Banach space with Schauder basis $\left( b_{n}\right) _{n=0}^{\infty }$, $%
Q\in \mathcal{M}_{X},$ $P_{n}:X\rightarrow X$ be the projector onto
the linear span of $\left\{ b_{0},b_{1},\ldots b_{n}\right\} $. $I$
be the identity map on $X$ and $R_{n}=I-P_{n}$ $(n=0,1,\dots)$.
Then we have%
\begin{equation}
\dfrac{1}{a}\cdot \limsup_{n\rightarrow \infty } \left( \sup_{x\in
Q}\left\Vert  R_{n} (x)\right\Vert \right) \leq \chi (Q)\leq
\limsup_{n\rightarrow \infty } \left( \sup_{x\in Q}\left\Vert R_{n}
(x)\right\Vert \right), \label{c16}
\end{equation}%
where $a=\limsup_{n\rightarrow \infty } \left\Vert
R_{n}\right\Vert$.
\end{Lem}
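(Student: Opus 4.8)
The plan is to prove the two inequalities separately, using only the standard properties of a Schauder basis: by the Banach--Steinhaus theorem the Schauder projections $P_n$ are uniformly bounded, so $a=\limsup_{n}\Vert R_n\Vert<\infty$ (and in fact $a\ge 1$, since $R_n b_{n+1}=b_{n+1}$ forces $\Vert R_n\Vert\ge 1$, so dividing by $a$ is legitimate), and for every fixed $x\in X$ one has $P_nx\to x$, equivalently $R_nx\to 0$.

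For the right-hand inequality, fix $n$. The set $P_n(Q)$ is a bounded subset of the finite--dimensional space $\operatorname{span}\{b_0,\dots,b_n\}$, hence totally bounded; so for each $\delta>0$ there is a finite set $\{z_1,\dots,z_m\}\subset X$ with $P_n(Q)\subset\bigcup_{j=1}^m B(z_j,\delta)$. Given $x\in Q$, write $x=P_nx+R_nx$ and choose $j$ with $\Vert P_nx-z_j\Vert<\delta$; then $\Vert x-z_j\Vert<\delta+\sup_{u\in Q}\Vert R_nu\Vert$. Hence $Q$ is covered by the finitely many balls $B\bigl(z_j,\delta+\sup_{u\in Q}\Vert R_nu\Vert\bigr)$, and so $\chi(Q)\le\delta+\sup_{u\in Q}\Vert R_nu\Vert$ by the definition of $\chi$. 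Letting $\delta\to 0$ and then passing to the limit superior in $n$ yields $\chi(Q)\le\limsup_{n\to\infty}\sup_{u\in Q}\Vert R_nu\Vert$.

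For the left-hand inequality, fix $\epsilon>\chi(Q)$ and choose $x_1,\dots,x_m\in X$ with $Q\subset\bigcup_{i=1}^m B(x_i,\epsilon)$. For each $x\in Q$ there is an $i$ with $\Vert x-x_i\Vert<\epsilon$, whence
\[
\Vert R_nx\Vert\le\Vert R_n(x-x_i)\Vert+\Vert R_nx_i\Vert\le\epsilon\,\Vert R_n\Vert+\max_{1\le i\le m}\Vert R_nx_i\Vert .
\]
Taking the supremum over $x\in Q$ and then the limit superior in $n$, and using that $R_nx_i\to 0$ for each of the finitely many $x_i$, we get $\limsup_{n\to\infty}\sup_{x\in Q}\Vert R_nx\Vert\le\epsilon\,\limsup_{n\to\infty}\Vert R_n\Vert=a\epsilon$. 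Since $\epsilon>\chi(Q)$ was arbitrary, $\limsup_{n\to\infty}\sup_{x\in Q}\Vert R_nx\Vert\le a\,\chi(Q)$, which is the claimed inequality after dividing by $a$.

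The proof has no deep step; the only thing to watch is uniformity. In the upper bound it is essential that the $\delta$-net of $P_n(Q)$ be \emph{finite}, which is exactly where the finite--dimensionality of the range of $P_n$ is used; in the lower bound it is essential that only \emph{finitely many} centres $x_i$ appear, so that $\max_i\Vert R_nx_i\Vert\to 0$ even though a supremum over the possibly infinite set $Q$ has been taken. The lemma is precisely the statement that these two observations furnish matching two-sided bounds for $\chi(Q)$.
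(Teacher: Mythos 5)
Your proof is correct, and the two inequalities are established exactly as in the standard argument (the Goldenštein--Gohberg--Markus theorem): total boundedness of $P_n(Q)$ in the finite--dimensional range for the upper bound, and a finite $\epsilon$-net together with $R_nx_i\to 0$ for finitely many centres for the lower bound, with the observation $\Vert R_n\Vert\ge 1$ justifying the division by $a$. The paper itself offers no proof --- it simply cites \cite[Theorem 2.23]{mara} --- and your argument is precisely the one given in that reference, so there is nothing to reconcile.
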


\begin{Lem} \cite[ Theorem 2.8.]{mara}
Let $Q$\ be a bounded subset of the normed
space $X$, where $X$ is $\ell _{p}$\ for $1\leq p<\infty $ or $c_{0}$. If\ $%
P_{n}:X\rightarrow X$ is the operator defined by $P_{n}(x)=x^{[n]}$ for $%
x=(x_{k})_{k=0}^{\infty }\in X$, then we have

\begin{equation*}
\chi (Q)=\lim_{n}\left( \sup_{x\in Q}\left\Vert R_{n} (x)\right\Vert
\right).
\end{equation*}
\end{Lem}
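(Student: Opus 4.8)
The plan is to squeeze $\chi(Q)$ between $\inf_n d_n$ and $\limsup_n d_n$, where I abbreviate $d_n := \sup_{x\in Q}\Vert R_n(x)\Vert$. First I would record the structural facts I will use: for $X=\ell_p$ with $1\le p<\infty$ and for $X=c_0$, the unit vectors $e^{(k)}$ form a Schauder basis, $P_n(x)=x^{[n]}=\sum_{k=0}^{n}x_k e^{(k)}$ is the associated coordinate projector, $R_n(x)=\sum_{k>n}x_k e^{(k)}$, and passing to a tail never increases the norm, so $\Vert R_n(x)\Vert\le\Vert x\Vert$; in particular each $d_n$ is finite because $Q$ is bounded. (Alternatively one could combine \cite[Theorem 2.23]{mara} with the easy computations $\Vert R_n\Vert=1$ and the monotonicity $d_{n+1}\le d_n$ in these spaces; I give a self-contained argument instead.)

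For the inequality $\chi(Q)\le d_n$, fix $n$ and $\epsilon>0$. The set $P_n(Q)$ lies in the finite-dimensional subspace spanned by $e^{(0)},\dots,e^{(n)}$ and is bounded, hence totally bounded, so it admits a finite cover $P_n(Q)\subset\bigcup_{j=1}^{m}B(z_j,\epsilon/2)$. For $x\in Q$ pick $j$ with $\Vert P_n(x)-z_j\Vert<\epsilon/2$; then $\Vert x-z_j\Vert\le\Vert x-P_n(x)\Vert+\Vert P_n(x)-z_j\Vert<\Vert R_n(x)\Vert+\epsilon/2\le d_n+\epsilon/2$. Thus $Q\subset\bigcup_{j=1}^{m}B(z_j,d_n+\epsilon)$ with all radii $<d_n+\epsilon$, so $\chi(Q)\le d_n+\epsilon$; letting $\epsilon\to0$ gives $\chi(Q)\le d_n$, hence $\chi(Q)\le\inf_n d_n$.

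For the reverse direction, let $c>\chi(Q)$ be arbitrary and choose a finite cover $Q\subset\bigcup_{j=1}^{m}B(y_j,\delta_j)$ with $\delta:=\max_j\delta_j<c$. Since each center $y_j$ belongs to $X$, its tail satisfies $\Vert R_n(y_j)\Vert\to0$ as $n\to\infty$ (tail of a convergent series when $X=\ell_p$, tail of a null sequence when $X=c_0$), and as there are only finitely many centers there is an $N$ with $\Vert R_n(y_j)\Vert<c-\delta$ for all $j$ whenever $n\ge N$. For such $n$ and any $x\in Q$, choosing $j$ with $x\in B(y_j,\delta_j)$ gives $\Vert R_n(x)\Vert\le\Vert R_n(x-y_j)\Vert+\Vert R_n(y_j)\Vert\le\Vert x-y_j\Vert+\Vert R_n(y_j)\Vert<\delta+(c-\delta)=c$, so $d_n\le c$ for all $n\ge N$; therefore $\limsup_n d_n\le c$, and since $c>\chi(Q)$ was arbitrary, $\limsup_n d_n\le\chi(Q)$.

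Combining the two chains of inequalities yields $\chi(Q)\le\inf_n d_n\le\liminf_n d_n\le\limsup_n d_n\le\chi(Q)$, so all of these quantities coincide; in particular $\lim_n d_n$ exists and equals $\chi(Q)$, which is the assertion. I do not expect a genuine obstacle here: the only point requiring slight care is the reverse direction, where one must exploit that the covering is \emph{finite}, so that the tails $\Vert R_n(y_j)\Vert$ of all the centers can be forced below $c-\delta$ simultaneously — this is precisely where the hypothesis $X\in\{\ell_p,c_0\}$ (equivalently, that $X$ has a Schauder basis of unit vectors whose tail projections have norm one) is used.
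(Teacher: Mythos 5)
Your argument is correct. Note, however, that the paper does not prove this statement at all: it is quoted verbatim as \cite[Theorem 2.8]{mara} (the classical Goldenštein--Gohberg--Markus characterization of $\chi$ on spaces with a monotone Schauder basis of unit vectors), so there is no in-paper proof to compare against. What you have written is a complete, self-contained proof of the cited result, and both halves are sound: the upper bound $\chi(Q)\leq d_n$ correctly exploits total boundedness of $P_n(Q)$ in a finite-dimensional subspace together with $\Vert x-P_n(x)\Vert=\Vert R_n(x)\Vert\leq d_n$; the lower bound $\limsup_n d_n\leq\chi(Q)$ correctly uses the finiteness of the cover to push all tails $\Vert R_n(y_j)\Vert$ below $c-\delta$ simultaneously, together with $\Vert R_n\Vert\leq 1$ on $\ell_p$ and $c_0$. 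Your closing remark identifies exactly where the hypothesis on $X$ enters, and the chain $\chi(Q)\leq\inf_n d_n\leq\limsup_n d_n\leq\chi(Q)$ legitimately delivers existence of the limit as part of the conclusion rather than assuming it.
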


When the first space is $ \ell_{1 }$, a problem takes place since $ \beta $ dual of   $\ell_{1 }$ is  $ \ell_{\infty }$, which has no $ AK $. In this case, the following study of Sargent \cite{Sargent}  is used  in order to characterize compact operators.

\begin{Lem}\label{lem2.8}
\cite[ Theorem 5.]{Sargent} If $L\in B(\ell_1,\ell_\infty)$, then $L$
is compact if and only if
\[
\lim_{m\to\infty}\sup_{1\leq n\leq
m}|a_{n,k_1}-a_{n,k_2}|=\sup_n|a_{n,k_1}-a_{n,k_2}|
\]
uniformly in $k_1$ and $k_2$, $(1\leq k_1,k_2<\infty)$.
\end{Lem}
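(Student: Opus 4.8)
The plan is to reduce compactness of $L$ to relative compactness in $\ell_{\infty}$ of the family of columns of the representing matrix, and then to recognise Sargent's condition as a Dini-type uniform-convergence statement for that family. Let $A=(a_{nk})$ represent $L$; since $L\in B(\ell_{1},\ell_{\infty})$ we have $\sup_{n,k}|a_{nk}|=\sup_{k}\|Le^{(k)}\|_{\ell_{\infty}}\le\|L\|<\infty$, so the columns $c^{(k)}=(a_{nk})_{n}$ form a bounded set $C=\{c^{(k)}:k\in\mathbb{N}\}\subset\ell_{\infty}$. Since $\bar B_{\ell_{1}}$ is the closed absolutely convex hull of $\{e^{(k)}\}_{k}$, continuity and linearity of $L$ give that $L(\bar B_{\ell_{1}})$ lies in the closed absolutely convex hull of $C$ and contains $C$; as the Hausdorff measure of noncompactness is unchanged under passing to the closed absolutely convex hull and to the closure, Lemma~\ref{compactoperator} yields $\|L\|_{\chi}=\chi(L(\bar B_{\ell_{1}}))=\chi(C)$. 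Hence, by \eqref{c14}, $L$ is compact if and only if $C$ is relatively compact in $\ell_{\infty}$.

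For $m\in\mathbb{N}$ define $\phi_{m}:\ell_{\infty}\to[0,\infty)$ by $\phi_{m}(z)=\|z\|_{\ell_{\infty}}-\max_{1\le n\le m}|z_{n}|$. Each $\phi_{m}$ is continuous, the sequence is pointwise decreasing, $0\le\phi_{m+1}\le\phi_{m}$, and $\phi_{m}(z)\to0$ for every $z\in\ell_{\infty}$ because $\max_{1\le n\le m}|z_{n}|\uparrow\|z\|_{\ell_{\infty}}$. Taking $z=c^{(k_{1})}-c^{(k_{2})}$ gives $\phi_{m}(z)=\sup_{n}|a_{n,k_{1}}-a_{n,k_{2}}|-\max_{1\le n\le m}|a_{n,k_{1}}-a_{n,k_{2}}|$, so Sargent's condition is precisely the statement that $\sup_{k_{1},k_{2}}\phi_{m}(c^{(k_{1})}-c^{(k_{2})})\to0$ as $m\to\infty$.

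For the \emph{necessity}, assume $L$ compact, so $K:=\overline{C}$ is compact and hence so is the difference set $K-K$; Dini's theorem applied to the decreasing sequence of continuous functions $\phi_{m}\downarrow0$ on the compact set $K-K$ gives $\sup_{z\in K-K}\phi_{m}(z)\to0$, and restricting to $z=c^{(k_{1})}-c^{(k_{2})}\in K-K$ yields the condition. For the \emph{sufficiency}, assume the condition and let $(c^{(k_{j})})_{j}$ be an arbitrary sequence in $C$; since $\ell_{\infty}$ is complete it suffices to extract a Cauchy subsequence. Fix $\varepsilon>0$ and choose $m_{\varepsilon}$ with $\|c^{(k_{1})}-c^{(k_{2})}\|_{\ell_{\infty}}\le\max_{1\le n\le m_{\varepsilon}}|a_{n,k_{1}}-a_{n,k_{2}}|+\varepsilon$ for all $k_{1},k_{2}$. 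The vectors $(a_{1,k_{j}},\dots,a_{m_{\varepsilon},k_{j}})$ lie in a bounded subset of the finite-dimensional space $\mathbb{C}^{m_{\varepsilon}}$, so along a subsequence $\max_{1\le n\le m_{\varepsilon}}|a_{n,k_{i}}-a_{n,k_{j}}|\to0$, whence $\limsup_{i,j}\|c^{(k_{i})}-c^{(k_{j})}\|_{\ell_{\infty}}\le\varepsilon$ along that subsequence. Carrying this out successively for $\varepsilon=1,\tfrac12,\tfrac13,\dots$ and passing to the diagonal subsequence produces $(c^{(k_{j}^{*})})_{j}$ with $\limsup_{i,j}\|c^{(k_{i}^{*})}-c^{(k_{j}^{*})}\|_{\ell_{\infty}}\le\tfrac1r$ for every $r$, hence $=0$: a Cauchy subsequence. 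Thus $C$ is relatively compact and, by the first paragraph, $L$ is compact.

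I expect the main obstacle to be the sufficiency direction: one must notice that Sargent's condition is exactly what allows ``freezing'' the first $m_{\varepsilon}$ coordinates (where finite-dimensional compactness is available) at a uniformly controlled tail cost, and then splicing the choices over $\varepsilon\to0$ by a diagonal argument. The necessity direction, once the auxiliary functionals $\phi_{m}$ are introduced and the compactness of $K-K$ is observed, reduces to a direct application of Dini's theorem.
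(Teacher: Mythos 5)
The paper does not actually prove this lemma: it is imported verbatim as Theorem~5 of Sargent \cite{Sargent}, so there is no internal proof to measure your argument against. Your proof is correct and self-contained, and it takes a genuinely different (modern) route from Sargent's original sectional-boundedness argument. You first reduce compactness of $L$ to relative compactness in $\ell_{\infty}$ of the column set $C=\{Le^{(k)}\}$, using that $\bar{B}_{\ell_{1}}$ is the closed absolutely convex hull of the unit vectors together with the invariance of $\chi$ under closures and (absolutely) convex hulls and Lemma~\ref{compactoperator}; this step is standard and sound, and one could even bypass $\chi$ entirely, since the closed absolutely convex hull of a compact set in a Banach space is compact, so $C$ relatively compact already forces $L(\bar{B}_{\ell_{1}})$ relatively compact. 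You then recognize Sargent's condition as the statement that the continuous functionals $\phi_{m}(z)=\Vert z\Vert_{\infty}-\max_{1\le n\le m}|z_{n}|$, which decrease pointwise to $0$, tend to $0$ uniformly on the difference set $\{c^{(k_{1})}-c^{(k_{2})}\}$: necessity is Dini's theorem on the compact set $\bar{C}-\bar{C}$, and sufficiency is a total-boundedness argument that freezes the first $m_{\varepsilon}$ coordinates (where finite-dimensional compactness applies) at a uniformly controlled tail cost, followed by a diagonal extraction. Both directions are complete. What the paper's citation buys is brevity; what your proof buys is that the lemma now lives inside the same Hausdorff-measure-of-noncompactness framework the paper develops in Section~3, rather than being an external import, and it makes transparent why the $\ell_{1}$ initial space is special (compactness of $L$ on $\ell_{1}$ is governed entirely by the columns $Le^{(k)}$, which is exactly what fails for the other initial spaces treated in the paper).
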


The following results (by \cite[Corollary 3.6]{djolovıc} and \cite[Theorems 3.7 and 3.11]{murnom}) are needed to determine estimates for the norms of continuous linear operators $ L_A $ on our spaces and establish necessary and sufficient conditions for a matrix operator to be a compact operator.

\begin{Lem} \label{Laksi}Then we have:
	\begin{itemize}
		\item[(i) ] If $A\in \left(X,c_{0} \right)$, then we have
		\begin{equation*}
		\Vert L_{A}\Vert _{\chi } = \limsup_{n\rightarrow \infty } \Vert A_n\Vert_{X}^{*}.     
		\end{equation*}
	
		\item[(ii) ] If $A\in \left( X,c\right)$, then we have
		\begin{equation*}
		\dfrac{1}{2} \cdot \limsup_{n\rightarrow \infty } \Vert A_n-\alpha \Vert_{X}^{*}
		\leq \Vert L_{A}\Vert _{\chi } \leq \limsup_{n\rightarrow \infty } \Vert A_n-\alpha\Vert_{X}^{*}.
		\end{equation*}
	
		\item[(iii) ] If $A\in \left( X,\ell_{1}\right)$, then we have
		\begin{equation*}
		\lim_{r\rightarrow \infty } \left( \sup_{N \in \mathcal{F}_{r}} \left\| \sum_{n\in N} A_n \right\| _{X}^{*}   \right) 
		\leq \Vert L_{A}\Vert _{\chi } \leq 4\cdot \lim_{r\rightarrow \infty } \left( \sup_{N \in \mathcal{F}_r} \left\|   \sum_{n\in N} A_n \right\| _{X}^{*}\right).
		\end{equation*}
	\end{itemize}		
\end{Lem}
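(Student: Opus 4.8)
The plan is to derive all three parts from the general identity $\Vert L_A\Vert_{\chi}=\chi\bigl(L_A(\bar{B}_{X})\bigr)$ of Lemma \ref{compactoperator}, by computing the Hausdorff measure of noncompactness of the bounded set $Q=L_A(\bar{B}_{X})$ inside the appropriate final space with the help of the remainder operators $R_{n}$. For the final spaces $c_{0}$ and $\ell_{1}$ the exact formula $\chi(Q)=\lim_{n}\bigl(\sup_{x\in Q}\Vert R_{n}(x)\Vert\bigr)$ is available (these are $\ell_{p}$-type spaces with AK), while for $c$ only the two-sided estimate $(\ref{c16})$, with its constant $a=\limsup_{n}\Vert R_{n}\Vert$, applies. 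In every case the last step is to rewrite the resulting row-wise suprema in terms of the functionals $\Vert\cdot\Vert_{X}^{*}$, all of which are finite by Lemma \ref{lem1.4} since the hypotheses force $A\in(X,\ell_{\infty})$.

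For (i), on $c_{0}$ the operator $R_{n}$ acts by $\Vert R_{n}z\Vert_{\infty}=\sup_{k>n}|z_{k}|$, so
\[
\Vert L_A\Vert_{\chi}=\lim_{n}\ \sup_{x\in\bar{B}_{X}}\ \sup_{k>n}\,|A_{k}x|=\lim_{n}\ \sup_{k>n}\,\Vert A_{k}\Vert_{X}^{*}=\limsup_{n\to\infty}\,\Vert A_{n}\Vert_{X}^{*},
\]
after interchanging the two suprema and using $\Vert A_{k}\Vert_{X}^{*}=\sup_{x\in S_{X}}|A_{k}x|$. For (iii) I would apply the same exact $\chi$-formula in $\ell_{1}$: if $A^{(n)}$ denotes the matrix obtained from $A$ by replacing its first $n$ rows with zero rows, then $\sup_{x\in\bar{B}_{X}}\Vert R_{n}(Ax)\Vert_{1}=\Vert L_{A^{(n)}}\Vert$, and Lemma \ref{lem1.4}(i) applied to $A^{(n)}\in(X,\ell_{1})$ traps this quantity between $\Vert A^{(n)}\Vert_{(X,\ell_{1})}$ and $4\,\Vert A^{(n)}\Vert_{(X,\ell_{1})}$. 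Since the nonzero rows of $A^{(n)}$ are precisely the $A_{k}$ with $k>n$, one gets $\Vert A^{(n)}\Vert_{(X,\ell_{1})}=\sup_{N\in\mathcal{F}_{n}}\Vert\sum_{k\in N}A_{k}\Vert_{X}^{*}$, and letting $n\to\infty$ along this non-increasing sequence yields exactly the inequalities in (iii).

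Part (ii) is the one I expect to cost the most effort. The Schauder basis of $c$ is $\{e,e^{(0)},e^{(1)},\dots\}$ with $e=(1,1,\dots)$, and for it one checks $(R_{n}z)_{k}=z_{k}-\lim_{j}z_{j}$ for $k>n$ and $(R_{n}z)_{k}=0$ otherwise, whence $\Vert R_{n}\Vert=2$ and $a=2$ in $(\ref{c16})$. Before using this, I must establish that for $A\in(X,c)$ the column limits $\alpha_{k}=\lim_{n}a_{nk}$ assemble into a sequence $\alpha=(\alpha_{k})\in X^{\beta}$ satisfying $\lim_{n}A_{n}x=\sum_{k}\alpha_{k}x_{k}$ for every $x\in X$; this is where the $BK$ structure (and, in the applications, the $AK$ property of $\ell_{p}$) is genuinely used, and it is the delicate point of the whole lemma. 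Granting it, $\Vert R_{n}(Ax)\Vert_{\infty}=\sup_{k>n}|(A_{k}-\alpha)x|$, so $\sup_{x\in\bar{B}_{X}}\Vert R_{n}(Ax)\Vert_{\infty}=\sup_{k>n}\Vert A_{k}-\alpha\Vert_{X}^{*}$, and feeding this into $(\ref{c16})$ with $a=2$ gives
\[
\tfrac{1}{2}\,\limsup_{n\to\infty}\,\Vert A_{n}-\alpha\Vert_{X}^{*}\ \le\ \Vert L_A\Vert_{\chi}\ \le\ \limsup_{n\to\infty}\,\Vert A_{n}-\alpha\Vert_{X}^{*},
\]
which is (ii). Apart from the identification of $\alpha$, each part reduces to bookkeeping with the correct remainder projector and one application of Lemma \ref{lem1.4} to guarantee finiteness.
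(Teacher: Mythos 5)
The paper itself offers no proof of this lemma --- it is imported verbatim from Corollary 3.6 of Djolovi\'{c}--Malkowsky and Theorems 3.7 and 3.11 of \cite{murnom} --- so there is no internal argument to compare against; I am judging your proposal on its own merits. Your route is exactly the one used in those sources: pass from $\Vert L_A\Vert_\chi$ to $\chi\left(L_A(\bar B_X)\right)$ by Lemma \ref{compactoperator}, evaluate that measure with the remainder projectors $R_n$ (exactly in $c_0$ and $\ell_1$, and only up to the factor $a=\limsup_n\Vert R_n\Vert=2$ in $c$ via (\ref{c16})), and then translate the row suprema into $\Vert\cdot\Vert_X^*$. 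Parts (i) and (iii) are correct and essentially complete: the interchange of suprema in (i) is legitimate, and in (iii) the identification $\sup_{x\in\bar B_X}\Vert R_n(Ax)\Vert_1=\Vert L_{A^{(n)}}\Vert$, the equality $\Vert A^{(n)}\Vert_{(X,\ell_1)}=\sup_{N\in\mathcal F_n}\Vert\sum_{k\in N}A_k\Vert_X^*$, and the monotonicity of that expression in $n$ combine with Lemma \ref{lem1.4}(i) to give precisely the stated two-sided bound.

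Part (ii) is where your proof stops being a proof. You explicitly ``grant'' the two facts on which the whole computation rests: that $\alpha=(\alpha_k)$ with $\alpha_k=\lim_n a_{nk}$ lies in $X^\beta$, and that $\lim_n A_nx=\sum_k\alpha_kx_k$ for every $x\in X$, which is what turns $\Vert R_n(Ax)\Vert_\infty$ into $\sup_{k>n}|(A_k-\alpha)x|$. That step is the entire substance of (ii), and it is not a formality: for a general $BK$ space it is false. For $A\in(c,c)$ the limit is $\lim_nA_nx=\left(\lim_n\sum_ka_{nk}-\sum_k\alpha_k\right)\lim_kx_k+\sum_k\alpha_kx_k$, so already the Ces\`{a}ro matrix (where $\alpha=0$ but $\lim_nA_nx=\lim_kx_k$) defeats the claimed identity. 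The hypothesis that rescues it --- present in \cite{murnom} but silently dropped in the statement above --- is that $X$ is a $BK$ space with AK (which is all the paper needs, since it only applies the lemma with $X=\ell_p$). You should therefore (a) state that hypothesis, and (b) actually prove the granted step: $\alpha_k=\lim_nA_ne^{(k)}$ exists since $e^{(k)}\in X$; $\sup_n\Vert A_n\Vert_X^*<\infty$ by Lemma \ref{lem1.4}(ii), whence $\alpha\in X^\beta$ with $\Vert\alpha\Vert_X^*\leq\sup_n\Vert A_n\Vert_X^*$; and the identity $\lim_nA_nx=\sum_k\alpha_kx_k$ extends from the finitely supported sequences to all of $X$ by AK together with the uniform bound on $\Vert A_n-\alpha\Vert_X^*$. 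Until that paragraph is written, (ii) is an outline rather than a proof.
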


We are now ready to state the main results related to compact operators. We start with establishing some estimates for the norms of bounded linear operators $ L_A $ on the given fractional sequence spaces  $ \ell_{p}(\Delta ^{(  \tilde{\alpha} )})$.

\begin{Thm}\label{main}
Let $1<p<\infty$ and $q=\frac p{p-1}$.

\begin{itemize}
\item[(i) ] If $A\in \left( \ell _{p}(\Delta ^{(  \tilde{\alpha} )}),c_{0} \right)$, then we have

\begin{equation} \label{m1}
\Vert L_{A}\Vert _{\chi }= \lim_{r\to\infty}  \sup_{n\geq r} \left(\sum_{k} |\hat{a}_{nk}|^q\right)^{\frac
1q}. 
\end{equation}

\item[(ii) ] If $A\in \left( \ell _{p}(\Delta ^{(  \tilde{\alpha} )}),c\right)$, then we have
\begin{eqnarray}\label{m2}
\dfrac{1}{2}\cdot \lim\limits_{r\rightarrow \infty }\sup\limits_{n\geq r}\left(
\sum\limits_{k}\left\vert \hat{a}_{nk}-
\hat{\alpha}_{k}\right\vert^q\right)^{\frac
1q}
&\leq &\Vert L_{A}\Vert _{\chi }\leq \lim\limits_{r\rightarrow \infty }\sup\limits_{n\geq r}\left(
\sum\limits_{k}\left\vert \hat{a}_{nk}-
\hat{\alpha}_{k}\right\vert^q\right)^{\frac
1q}.  
\end{eqnarray}

\item[(iii) ] If $A\in \left( \ell _{p}(\Delta ^{(  \tilde{\alpha} )}),\ell_{1}\right)$, then we have

\begin{equation}\label{m3}
\lim_{r\rightarrow \infty } \left( \sup_{N \in \mathcal{F}_r} \left(
\sum\limits_{k}\left\vert \sum_{n\in N}
\hat{a}_{nk}\right\vert^q\right)^{\frac
	1q}   \right) 
\leq \Vert L_{A}\Vert _{\chi }\leq 4\cdot \lim_{r\rightarrow \infty } \left( \sup_{N \in \mathcal{F}_r} \left(
\sum\limits_{k}\left\vert \sum_{n\in N}
\hat{a}_{nk}\right\vert^q\right)^{\frac
	1q}   \right) .
\end{equation}

\end{itemize}
\end{Thm}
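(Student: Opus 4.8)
The plan is to reduce each of the three cases to the corresponding item of Lemma~\ref{Laksi} applied with $X=\ell_{p}(\Delta^{(\tilde{\alpha})})$, and then to rewrite the abstract quantities $\Vert A_{n}\Vert_{X}^{*}$, $\Vert A_{n}-\alpha\Vert_{X}^{*}$ and $\bigl\Vert\sum_{n\in N}A_{n}\bigr\Vert_{X}^{*}$ in terms of the matrix $\hat{A}=(\hat{a}_{nk})$ defined in~(\ref{Ahat}). The key bridge is Lemma~\ref{lem6.5}: for any $a\in\bigl(\ell_{p}(\Delta^{(\tilde{\alpha})})\bigr)^{\beta}$ one has $\Vert a\Vert_{\ell_{p}(\Delta^{(\tilde{\alpha})})}^{*}=\Vert\overline{a}\Vert_{\ell_{q}}=\bigl(\sum_{k}|\overline{a}_{k}|^{q}\bigr)^{1/q}$, where $\overline{a}$ is given by~(\ref{abar}). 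First I would observe that since $A\in(\ell_{p}(\Delta^{(\tilde{\alpha})}),Y)$ for $Y\in\{c_{0},c,\ell_{1}\}$, the $n$th row $A_{n}$ lies in $\bigl(\ell_{p}(\Delta^{(\tilde{\alpha})})\bigr)^{\beta}$, and the sequence $\overline{(A_{n})}$ built from $A_{n}$ via~(\ref{abar}) is exactly the $n$th row $\hat{A}_{n}$ of $\hat{A}$ in~(\ref{Ahat}); this identification is the routine but essential bookkeeping step.

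For case~(i), apply Lemma~\ref{Laksi}(i) to get $\Vert L_{A}\Vert_{\chi}=\limsup_{n\to\infty}\Vert A_{n}\Vert_{X}^{*}$, then substitute $\Vert A_{n}\Vert_{X}^{*}=\bigl(\sum_{k}|\hat{a}_{nk}|^{q}\bigr)^{1/q}$ from Lemma~\ref{lem6.5}, and finally note that $\limsup_{n\to\infty}c_{n}=\lim_{r\to\infty}\sup_{n\geq r}c_{n}$ for any real sequence to recast it in the form~(\ref{m1}). For case~(ii), apply Lemma~\ref{Laksi}(ii); here I must also check that the limit matrix $\alpha=(\alpha_{k})$ appearing in that lemma, when passed through the transform~(\ref{abar}), produces precisely $\hat{\alpha}=(\hat{\alpha}_{k})$ with $\hat{\alpha}_{k}=\lim_{n\to\infty}\hat{a}_{nk}$ as in~(\ref{alphak})---this uses that the transform in~(\ref{abar}) is given by a fixed triangle and hence commutes with the entrywise limit, together with $Ax=\hat{A}y$. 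Then $\Vert A_{n}-\alpha\Vert_{X}^{*}=\bigl(\sum_{k}|\hat{a}_{nk}-\hat{\alpha}_{k}|^{q}\bigr)^{1/q}$ by Lemma~\ref{lem6.5}, and rewriting the $\limsup$ gives~(\ref{m2}). For case~(iii), apply Lemma~\ref{Laksi}(iii) with $X=\ell_{p}(\Delta^{(\tilde{\alpha})})$; since the map $a\mapsto\overline{a}$ of~(\ref{abar}) is linear, $\overline{\sum_{n\in N}A_{n}}=\sum_{n\in N}\hat{A}_{n}$, so $\bigl\Vert\sum_{n\in N}A_{n}\bigr\Vert_{X}^{*}=\bigl(\sum_{k}\bigl|\sum_{n\in N}\hat{a}_{nk}\bigr|^{q}\bigr)^{1/q}$ by Lemma~\ref{lem6.5}, which yields~(\ref{m3}) directly.

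The main obstacle I anticipate is the verification in case~(ii) that the $\alpha$ of Lemma~\ref{Laksi} transforms to $\hat{\alpha}$; one has to be careful that interchanging the limit in $n$ with the (infinite) sum defining the transform~(\ref{abar}) is justified. This can be handled by noting that, via $Ax=\hat{A}y$ and the fact that $A\in(\ell_{p}(\Delta^{(\tilde{\alpha})}),c)$ forces $\hat{A}\in(\ell_{p},c)$ (by the Remark preceding Lemma~\ref{lem6.4}), the columns of $\hat{A}$ converge, so $\hat{\alpha}_{k}=\lim_{n}\hat{a}_{nk}$ is well defined and the matrix $A-e^{(n)}\!\cdot$ -- more precisely the row $A_{n}-\alpha$ -- has transform $\hat{A}_{n}-\hat{\alpha}$; the identity $\overline{(A_{n}-\alpha)}=\hat{A}_{n}-\hat{\alpha}$ then follows from linearity of~(\ref{abar}) once one identifies $\overline{\alpha}=\hat{\alpha}$, and the latter identification is where the only genuine care is needed. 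Everything else is a substitution of Lemma~\ref{lem6.5} into Lemma~\ref{Laksi} together with the elementary rewriting $\limsup_{n}=\lim_{r}\sup_{n\geq r}$.
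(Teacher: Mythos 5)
Your treatment of parts (i) and (iii) matches the paper's: both apply Lemma \ref{Laksi} with $X=\ell_{p}(\Delta^{(\tilde{\alpha})})$ and use Lemma \ref{lem6.5} (plus linearity of $a\mapsto\overline{a}$ for (iii)) to convert $\Vert\cdot\Vert_{X}^{*}$ into the $\ell_{q}$ quantities in (\ref{m1}) and (\ref{m3}). The divergence, and the gap, is in part (ii). You apply Lemma \ref{Laksi}(ii) directly to $A$ on $X=\ell_{p}(\Delta^{(\tilde{\alpha})})$, so the $\alpha$ of that lemma is the sequence of column limits of $A$ itself, and you must then prove $\overline{\alpha}=\hat{\alpha}$, i.e.\ that the infinite-series transform (\ref{abar}) commutes with the entrywise limit $n\to\infty$. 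You correctly name this as the delicate point, but your justification is only an assertion: convergence of the columns of $\hat{A}$ gives existence of $\hat{\alpha}_{k}$, yet it does not justify the interchange $\lim_{n}\sum_{i\geq k}(\cdots)a_{ni}=\sum_{i\geq k}(\cdots)\lim_{n}a_{ni}$, which requires a uniformity you never supply (the transform (\ref{abar}) is an infinite sum, not a finite one, so ``fixed triangle'' does not settle it). A second, unflagged issue is that the versions of Lemma \ref{Laksi}(i)--(ii) in the cited sources assume $X$ is a $BK$ space with AK, and the paper never establishes AK for $\ell_{p}(\Delta^{(\tilde{\alpha})})$.

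The paper takes a different route for (ii) that avoids both problems: it passes to $\hat{A}\in(\ell_{p},c)$, applies Lemma \ref{Laksi}(ii) there --- where $\ell_{p}$ has AK and the lemma's $\alpha$ is by definition $\hat{\alpha}_{k}=\lim_{n}\hat{a}_{nk}$ as in (\ref{alphak}), so no interchange of limits arises --- and then transfers the estimate back by showing $\Vert L_{A}\Vert_{\chi}=\Vert L_{\hat{A}}\Vert_{\chi}$, which follows from $A\hat{S}=\hat{A}S$ together with Lemma \ref{compactoperator} (the equalities (\ref{LAS1}) and (\ref{LAS2})). To close your argument you should either restructure part (ii) along these lines or actually prove the identity $\overline{\alpha}=\hat{\alpha}$; as written, that step is a genuine gap.
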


\begin{proof}
Let $A\in \left( \ell _{p}(\Delta ^{(  \tilde{\alpha} )}),c_{0}\right)$. By Lemma \ref{lem6.5}, we have  

\begin{equation*}
\Vert A\Vert _{\ell _{p}(\Delta ^{(  \tilde{\alpha} )})}^{*}=\Vert \hat{A}_n\Vert _{\ell _{q}}=\left(\sum_{k=0}^{\infty} |\hat{a}_{nk}|^q\right)^{\frac
	1q}. 
\end{equation*}

for all $n=0,1,\ldots $ because $A_n \in \left( \ell _{p}(\Delta ^{(  \tilde{\alpha} )}) \right)^{\beta}$ for all $n=0,1,\ldots $. Taking into account this result and Lemma \ref{Laksi}(i) we obtain (\ref{m1}). 

In order to prove the second part we start with $A\in \left( \ell _{p}(\Delta ^{(  \tilde{\alpha} )}),c\right)$. By Remark \ref{Ahat} $\hat{A}\in \left( \ell _{p},c\right)$. So, we have by Lemma \ref{Laksi}(ii) that
\begin{equation*}
	\dfrac{1}{2} \cdot \limsup_{n\rightarrow \infty } \Vert \hat{A}_n-\hat{\alpha} \Vert_{\ell _{q}}
	\leq \Vert L_{A}\Vert _{\chi } \leq \limsup_{n\rightarrow \infty } \Vert \hat{A}_n-\hat{\alpha}\Vert_{\ell _{q}}, 
\end{equation*}
where $\hat{\alpha}$ defined in (\ref{alphak}). Hence, by Lemma \ref{compactoperator} we get

\begin{equation} \label{LAS1}
\left\Vert  L_{A}\right\Vert _{\chi }=\chi \left( L_{A}(\hat{S})\right)
=\chi \left( A\hat{S}\right),  
\end{equation}
and
\begin{equation} \label{LAS2}
\left\Vert  L_{\hat{A}}\right\Vert _{\chi }=\chi \left( L_{\hat{A}}(S)\right)
=\chi \left( \hat{A}S\right).  
\end{equation}
Morover, $ x \in \hat{S} $ if and only if $ y \in S $. We have $A\hat{S}=\hat{A}S $ because $ Ax=\hat{A}y $ by Remark \ref{Ahat}. Taking into account the equalities (\ref{LAS1}) and (\ref{LAS2}) we conclude that $\left\Vert  L_{A}\right\Vert _{\chi }= \left\Vert  L_{\hat{A}}\right\Vert _{\chi } $ which completes the second part of the proof.

Finally, let $A\in \left( \ell _{p}(\Delta ^{(  \tilde{\alpha} )}),\ell_{1}\right)$. We derive from Lemma \ref{lpdeltanorm} that 
\begin{equation}\label{l1}
\left\| \sum_{n\in N} A_n \right\| _{\ell _{p}(\Delta ^{(  \tilde{\alpha} )})}^{*} =
\left\| \sum_{n\in N} \hat{A}_n \right\| _{\ell _{q}}
\end{equation}

because $ A_n \in (\ell _{p}(\Delta ^{(  \tilde{\alpha} )}))^\beta $. Hence, we obtain the condition in (\ref{m3}) by (\ref{l1}) and Lemma \ref{Laksi}(iii).

\end{proof}

\begin{Thm}
Let $1\leq p<\infty$.

\begin{itemize}
\item[(i) ] If $A\in \left( \ell _{1}(\Delta ^{(  \tilde{\alpha} )}),c_{0}\right)$ then we have

\begin{equation}
\Vert L_{A}\Vert _{\chi }= \lim_{r\to\infty}  \sup_{n\geq r} \left(\sup_{k} | \hat{a}_{nk}|\right).
\end{equation}

\item[(ii) ] If $A\in \left( \ell _{1}(\Delta ^{(  \tilde{\alpha} )}),c\right)$ then we have
\begin{eqnarray}
\dfrac{1}{2}\cdot \lim\limits_{r\rightarrow \infty }\sup\limits_{n\geq r}\left(
\sup_{k}\left\vert \hat{a}_{nk}-\hat{\alpha}_{k}\right\vert\right)
&\leq &\Vert L_{A}\Vert _{\chi }\leq \lim\limits_{r\rightarrow \infty }\sup\limits_{n\geq r}\left(
\sup_{k}\left\vert \hat{a}_{nk}-\hat{\alpha}_{k}\right\vert\right).  \notag
\end{eqnarray}

\end{itemize}
\end{Thm}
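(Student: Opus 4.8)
The plan is to run the argument of Theorem~\ref{main} at the endpoint $p=1$, where the conjugate exponent degenerates to $q=\infty$, so that every $\ell_{q}$-sum over $k$ appearing in $(\ref{m1})$ and $(\ref{m2})$ becomes a supremum over $k$. The three tools are the same as in Theorem~\ref{main}: boundedness of $L_{A}$ from Lemma~\ref{lem1.3}(i); the evaluation of $\Vert\cdot\Vert^{*}$ on $(\ell_{1}(\Delta^{(\tilde{\alpha})}))^{\beta}$ through the associated matrix $\hat{A}$ of $(\ref{Ahat})$, given by Lemmas~\ref{lem6.4}--\ref{lem6.5} with $p=1$; and the Hausdorff measure estimates of Lemma~\ref{Laksi}.

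For part~(i), let $A\in(\ell_{1}(\Delta^{(\tilde{\alpha})}),c_{0})$. Both spaces are $BK$, so Lemma~\ref{lem1.3}(i) gives $L_{A}\in\mathcal{B}(\ell_{1}(\Delta^{(\tilde{\alpha})}),c_{0})$ and $\Vert L_{A}\Vert_{\chi}$ is defined. Since $A_{n}\in(\ell_{1}(\Delta^{(\tilde{\alpha})}))^{\beta}$ and the sequence $\overline{A_{n}}$ of $(\ref{abar})$ is precisely the $n$-th row $\hat{A}_{n}=(\hat{a}_{nk})_{k}$ of $\hat{A}$, Lemma~\ref{lem6.5} with $p=1$ yields $\Vert A_{n}\Vert^{*}_{\ell_{1}(\Delta^{(\tilde{\alpha})})}=\Vert\overline{A_{n}}\Vert_{\ell_{\infty}}=\sup_{k}|\hat{a}_{nk}|$ for every $n$. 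Inserting this into Lemma~\ref{Laksi}(i) and using $\limsup_{n\to\infty}b_{n}=\lim_{r\to\infty}\sup_{n\ge r}b_{n}$ for the bounded sequence $b_{n}=\sup_{k}|\hat{a}_{nk}|$ gives the identity of part~(i). (Alternatively, one may first show $\Vert L_{A}\Vert_{\chi}=\Vert L_{\hat{A}}\Vert_{\chi}$ as in the next paragraph and then apply Lemma~\ref{Laksi}(i) to $\hat{A}\in(\ell_{1},c_{0})$.)

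For part~(ii), let $A\in(\ell_{1}(\Delta^{(\tilde{\alpha})}),c)$. By the Remark carrying $(\ref{Ahat})$ and $(\ref{alphak})$ we have $\hat{A}\in(\ell_{1},c)$ with $\hat{\alpha}_{k}=\lim_{n}\hat{a}_{nk}$. Applying Lemma~\ref{Laksi}(ii) to $\hat{A}$ with $X=\ell_{1}$ and using $\Vert z\Vert^{*}_{\ell_{1}}=\Vert z\Vert_{\ell_{\infty}}=\sup_{k}|z_{k}|$ (Lemma~\ref{lem6.3}) gives
\[
\frac{1}{2}\limsup_{n\to\infty}\sup_{k}|\hat{a}_{nk}-\hat{\alpha}_{k}|\le\Vert L_{\hat{A}}\Vert_{\chi}\le\limsup_{n\to\infty}\sup_{k}|\hat{a}_{nk}-\hat{\alpha}_{k}|.
\]
It then remains to identify $\Vert L_{A}\Vert_{\chi}$ with $\Vert L_{\hat{A}}\Vert_{\chi}$, and this is carried out exactly as in the proof of Theorem~\ref{main}(ii): by Lemma~\ref{compactoperator}, $\Vert L_{A}\Vert_{\chi}=\chi(A\hat{S})$ and $\Vert L_{\hat{A}}\Vert_{\chi}=\chi(\hat{A}S)$; since $x\in\hat{S}$ if and only if $y\in S$ (Lemma~\ref{lem6.3}) and $Ax=\hat{A}y$ whenever $x$ and $y$ are linked by $(\ref{yk})$, the image sets coincide, $A\hat{S}=\hat{A}S$, hence $\Vert L_{A}\Vert_{\chi}=\Vert L_{\hat{A}}\Vert_{\chi}$. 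Rewriting $\limsup_{n}$ as $\lim_{r\to\infty}\sup_{n\ge r}$ then produces the double inequality of part~(ii).

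The whole argument is a transcription of the $1<p<\infty$ case with $q=\infty$, so there is no genuine analytic obstacle; the points to watch are bookkeeping ones. One must confirm that $\overline{A_{n}}$ from Lemma~\ref{lem6.4} is literally $\hat{A}_{n}=(\hat{a}_{nk})_{k}$ (compare $(\ref{abar})$ with $(\ref{Ahat})$), so that Lemma~\ref{lem6.5} actually outputs $\sup_{k}|\hat{a}_{nk}|$ and not some variant; and one must ensure the Hausdorff measure lemmas are invoked only over admissible spaces, which is why in part~(ii) the estimate is first obtained for $\hat{A}$ over $\ell_{1}$, an $AK$ space, and then transported back to $L_{A}$ through $A\hat{S}=\hat{A}S$. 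The endpoint $p=1$ is harmless here precisely because $\ell_{1}$ itself still has $AK$; this is in contrast to the situation recalled before Lemma~\ref{lem2.8}, where $\ell_{1}$ occurs as the \emph{initial} space and its $\beta$-dual $\ell_{\infty}$ lacks $AK$, forcing the use of Sargent's criterion there but not in the present theorem.
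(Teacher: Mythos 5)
Your argument is correct and is essentially the paper's own route: the paper disposes of this theorem in one line as an immediate consequence of Lemma \ref{Laksi} and Theorem \ref{main}, i.e.\ precisely the $p=1$, $q=\infty$ specialization you carry out, with the $\ell_q$-sums replaced by suprema via Lemma \ref{lem6.5} and the transfer $\Vert L_A\Vert_\chi=\Vert L_{\hat A}\Vert_\chi$ through $A\hat S=\hat A S$. Your write-up merely makes explicit the bookkeeping the paper leaves implicit.
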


\begin{proof}
This is an immediate consequence of Theorem \ref{thm2.5}, Lemma \ref{Laksi} and Theorem \ref{main}.
\end{proof}

We now use the results in \cite{katarina} and \cite{Sargent}  to obtain necessary and sufficient conditions for the classes of compact matrix operators in $(\ell_{p} (\Delta ^{(  \widetilde{\alpha} )}  ),\ell _{\infty })$, $(\ell_{1} (\Delta ^{(  \widetilde{\alpha} )}  ),\ell _{\infty })$ and
$(\ell _{\infty }(\Delta ^{(  \widetilde{\alpha} )}  ),\ell _{\infty })$.

\begin{Thm} Let $1<p<\infty$ and $q=\frac p{p-1}$.
\begin{itemize}
\item[(i) ] If $A\in (\ell _{p}(\Delta ^{(  \tilde{\alpha} )}),\ell_\infty)$,
then $L_A$ is compact if and only if 

\[ \lim_{r\rightarrow \infty
}\sup_{n}\left(\sum_{k=r+1}^{\infty }|\hat{a}_{nk}|^{q}\right)^{\frac{1}{q}}=0 .
\]

\item[(ii) ] If $A\in (\ell_1(\Delta ^{(  \tilde{\alpha} )}),\ell_\infty )$ , then $L_A$ is compact if and only if
\[
\lim_{m\to\infty}\sup_{1\leq n\leq m}|\hat {a}_{n,k_1}-\hat{a}
_{n,k_2}|=\sup_n|\hat{a}_{n,k_1}-\hat{a}_{n,k_2}|
\]
uniformly in $k_1$ and $k_2$, $(1\leq k_1,k_2<\infty)$ .
\end{itemize}
\end{Thm}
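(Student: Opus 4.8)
The plan is to reduce both parts to known characterizations of compact operators taking values in $\ell_\infty$, by transporting the problem along the triangle $T:=\Delta^{(\tilde{\alpha})}$. The starting point is the Remark preceding Lemma \ref{lem6.4}: if $A\in(\ell_p(\Delta^{(\tilde{\alpha})}),\ell_\infty)$ then the matrix $\hat A=(\hat a_{nk})$ defined by (\ref{Ahat}) satisfies $\hat A\in(\ell_p,\ell_\infty)$ and $Ax=\hat A y$ whenever $x$ and $y$ are connected by (\ref{yk}); the same is true when $p=1$. Since $T$ is a triangle, it is a linear bijection of $\ell_p(\Delta^{(\tilde{\alpha})})$ onto $\ell_p$, and by the definition of the norm on $\ell_p(\Delta^{(\tilde{\alpha})})$ (Lemma \ref{BKspace}) it is an isometric isomorphism, with $\|Tx\|_{\ell_p}=\|x\|_{\ell_p(\Delta^{(\tilde{\alpha})})}$. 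From $Ax=\hat A y=\hat A(Tx)$ we get the factorization $L_A=L_{\hat A}\circ T$, and since $T^{-1}$ is bounded also $L_{\hat A}=L_A\circ T^{-1}$; as the composition of a compact operator with a bounded one is compact, $L_A$ is compact if and only if $L_{\hat A}$ is compact. Equivalently, $T(\bar B_{\ell_p(\Delta^{(\tilde{\alpha})})})=\bar B_{\ell_p}$, so $L_A(\bar B_{\ell_p(\Delta^{(\tilde{\alpha})})})=L_{\hat A}(\bar B_{\ell_p})$ and therefore $\|L_A\|_\chi=\|L_{\hat A}\|_\chi$ by Lemma \ref{compactoperator} --- the same device used for the $c$-valued case in the proof of Theorem \ref{main}(ii).

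Given this reduction, part (i) follows by applying to $\hat A$ the characterization of compactness in the class $(\ell_p,\ell_\infty)$ from \cite{katarina}: a matrix $B\in(\ell_p,\ell_\infty)$ defines a compact operator exactly when $\lim_{r\to\infty}\sup_n\bigl(\sum_{k=r+1}^\infty|b_{nk}|^q\bigr)^{1/q}=0$; with $B=\hat A$ this is the asserted condition. For part (ii) the same reduction gives $\hat A\in(\ell_1,\ell_\infty)$ with $L_A$ compact iff $L_{\hat A}$ compact, and it only remains to invoke Sargent's theorem in the form of Lemma \ref{lem2.8} for the operator $L_{\hat A}\in B(\ell_1,\ell_\infty)$ with matrix $\hat A$, which yields precisely the stated uniform-limit condition on the differences $\hat a_{n,k_1}-\hat a_{n,k_2}$.

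The verifications that $\hat A$ lies in the correct matrix class and that $T$ is an isometric surjection onto $\ell_p$ are routine, being immediate from the definitions and Lemma \ref{BKspace}. I expect the one delicate point to be the equivalence ``$L_A$ compact $\iff$ $L_{\hat A}$ compact'': because the final space $\ell_\infty$ is not a $BK$ space with $AK$ and has no Schauder basis, $\|L_A\|_\chi$ cannot be evaluated directly via the projection formulas available in the $c_0$- and $c$-valued situations of Theorem \ref{main}, so this equivalence must be obtained purely from the factorization $L_A=L_{\hat A}\circ T$ through the isomorphism $T$ (equivalently, from $L_A(\bar B_{\ell_p(\Delta^{(\tilde{\alpha})})})=L_{\hat A}(\bar B_{\ell_p})$), and not from an explicit expression for $\|L_A\|_\chi$. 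Once this is settled, both (i) and (ii) are immediate consequences of the cited results of \cite{katarina} and \cite{Sargent}.
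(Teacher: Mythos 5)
Your proposal is correct and follows essentially the same route as the paper: reduce to the associated matrix $\hat{A}$ via the isometry $T=\Delta^{(\tilde{\alpha})}$ (so that $L_A$ is compact if and only if $L_{\hat{A}}$ is, exactly the device the paper extracts from the equalities (\ref{LAS1})--(\ref{LAS2}) and $A\hat{S}=\hat{A}S$), and then apply the known compactness characterizations for $(\ell_p,\ell_\infty)$ and, via Lemma \ref{lem2.8}, for $(\ell_1,\ell_\infty)$. The only cosmetic difference is that you cite \cite{katarina} for the $(\ell_p,\ell_\infty)$ criterion where the paper derives it from Sargent's condition (b) on p.~85 after checking that the boundedness condition $\sup_n\Vert\hat{A}_n\Vert_q<\infty$ is automatic from the matrix class; the content is the same.
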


\begin{proof}
Assume $A\in (\ell _{p}(\Delta ^{(  \tilde{\alpha} )}),\ell _{\infty })$. By Lemma \ref{operatornorm.1}, we obtain $L_{A}\in \mathcal{B}(\ell _{p}(\Delta ^{(  \tilde{\alpha} )}),\ell _{\infty })$ and $\hat{A}\in (\ell _{p},\ell _{\infty })$
such that $Ax=\hat{A}(\Delta ^{(  -\tilde{\alpha} )}x)$ for all $x\in \ell _{p}(\Delta ^{(  \tilde{\alpha} )})$, where $\Delta ^{( - \tilde{\alpha} )}$ is the inverse of the triangle $\Delta ^{(  \tilde{\alpha} )}$ and $\hat{a}_{nk}$ defined in (\ref{Ahat}). We have $L_{\hat{A}}\in \mathcal{B}(\ell _{p},\ell _{\infty })$ and {$\hat{A}x=L_{\hat{A}}(x)$} for all $x\in \ell _{p}$ because $\ell _{p}$ is a $BK$ space. If we write $(\hat{A}_{n})_{r}$ for the $r$th row of the matrix $\hat{A}$ with $\hat{a}_{nk}$ replaced by 0 for $k>r$,

\[
(\hat{A}_{n})_{r}=\sum_{k=0}^{r}\hat{a}_{nk}e^{(k)}
\]

and apply {\cite[(b), p.85 ]{Sargent}} we obtain $L_{\hat{A}}$ is compact if and only if

\[
\sup_{n}||\hat{A}_{n}||_{q}=\sup_{n}\left(\sum_{k}|\hat{a}_{nk}|^{q}\right)^{\frac{1}{
q}}<\infty
\]
and
\[ \lim_{r\rightarrow \infty }\sup_{n}||\hat{A}_{n}-(\hat{A}_{n})_{\mathbf{r}
}||_{q}=\lim_{r\rightarrow \infty
}\sup_{n}\left(\sum_{k=r+1}^{\infty }|\hat{a}_{nk}|^{q}\right)^{\frac{1}{q}}=0 .
\]

The first condition yields from the characterizations of matrix transformations, \cite[Example 8.4.5D and Example 8.4.6D]{Wil2}. Hence $L_{\hat{A}}$ is compact if and only if the condition

\[ \lim_{r\rightarrow \infty
}\sup_{n}\left(\sum_{k=r+1}^{\infty }|\hat{a}_{nk}|^{q}\right)^{\frac{1}{q}}=0
\]

yields. Taking into account the equalities (\ref{LAS1}) and (\ref{LAS2}) we have $L_{A}$ is compact if and only if $L_{\hat{A}}$ is compact. This completes the first part of the proof.

We know that $L_{A}$ is compact if and only if $L_{\hat{A}
}$ is compact with the equalities (\ref{LAS1}) and (\ref{LAS2}). Also $A\in (\ell_{1}(\Delta ^{(  \tilde{\alpha} )} ,\ell _{\infty })$ implies
$\hat{A}\in (\ell_{1},\ell _{\infty })$. As an immediate consequence of Lemma \ref{lem2.8} we prove the second part of the theorem.
\end{proof}

\begin{Thm}
Let $A\in (\ell_\infty (\Delta ^{(  \tilde{\alpha} )}),\ell_\infty)$. Then $L_A$ is compact if and only
if
\begin{equation}
\lim_{r\to\infty}  \sup_n \sum_{k=r+1}^\infty |\hat{a}_{nk}|=0. \label{linf}
\end{equation}
\end{Thm}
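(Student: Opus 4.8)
The plan is to reduce the assertion to the corresponding compactness criterion for the associated matrix $\hat{A}$ acting on the classical space $\ell_\infty$, prove the sufficiency of that criterion by a finite--rank approximation, and obtain the necessity from Sargent's results.

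First I would apply Lemma \ref{operatornorm.1} with $X=\ell_\infty$ and $T=\Delta^{(  \tilde{\alpha} )}$: from $A\in(\ell_\infty(\Delta ^{(  \tilde{\alpha} )}),\ell_\infty)=(X_T,\ell_\infty)$ it follows that $\hat{A}\in(\ell_\infty,\ell_\infty)$ (so, by Lemma \ref{lem1.4}(ii) and Lemma \ref{lem1.3}(ii), $\|L_A\|=\|L_{\hat A}\|=\sup_n\sum_k|\hat a_{nk}|<\infty$, with $\hat a_{nk}$ as in (\ref{Ahat})) and that $Az=\hat{A}(\Delta ^{(  \tilde{\alpha} )}z)$ for all $z\in\ell_\infty(\Delta ^{(  \tilde{\alpha} )})$. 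Since $\Delta ^{(  \tilde{\alpha} )}$ is a triangle and the norm of $\ell_\infty(\Delta ^{(  \tilde{\alpha} )})$ is $\|z\|=\|\Delta ^{(  \tilde{\alpha} )}z\|_\infty$, the map $z\mapsto\Delta ^{(  \tilde{\alpha} )}z$ is a surjective linear isometry from $\ell_\infty(\Delta ^{(  \tilde{\alpha} )})$ onto $\ell_\infty$; hence $A(\bar B_{\ell_\infty(\Delta ^{(  \tilde{\alpha} )})})=\hat{A}(\bar B_{\ell_\infty})$, and Lemma \ref{compactoperator} yields $\|L_A\|_\chi=\|L_{\hat A}\|_\chi$ exactly as in (\ref{LAS1})--(\ref{LAS2}). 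Consequently $L_A$ is compact if and only if $L_{\hat A}\in\mathcal B(\ell_\infty,\ell_\infty)$ is compact, and it remains to characterize the latter.

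For the sufficiency of (\ref{linf}), let $B^{(r)}$ be the matrix obtained from $\hat A$ by replacing $\hat a_{nk}$ with $0$ for all $k>r$. Each $L_{B^{(r)}}$ factors as $\ell_\infty\to\mathbb{C}^{r+1}\to\ell_\infty$ through $x\mapsto(x_0,\dots,x_r)$, so it is a bounded finite--rank, hence compact, operator; and by Lemma \ref{lem1.4}(ii) and Lemma \ref{lem1.3}(ii), $\|L_{\hat A}-L_{B^{(r)}}\|=\sup_n\|\hat A_n-(\hat A_n)^{[r]}\|_{\ell_1}=\sup_n\sum_{k=r+1}^\infty|\hat a_{nk}|$, which tends to $0$ by (\ref{linf}). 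Thus $L_{\hat A}$ is a norm limit of compact operators and is therefore compact, so $L_A$ is compact.

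For the necessity I would invoke Sargent's analysis (the source already used in Lemma \ref{lem2.8} and in the proof of the previous theorem), which shows that a matrix $\hat A\in(\ell_\infty,\ell_\infty)$ induces a compact operator only if $\lim_{r\to\infty}\sup_n\sum_{k=r+1}^\infty|\hat a_{nk}|=0$; together with the reduction of the first paragraph this completes the proof. I expect the necessity to be the main obstacle: $\ell_\infty$ has no Schauder basis, so the Hausdorff--measure projection identities of Lemma \ref{Laksi} (which cover only the final spaces $c_0$, $c$ and $\ell_1$) are not available here, and a direct gliding--hump construction runs into the difficulty of controlling the ``head'' $\sum_{k\le r}|\hat a_{m,k}|$ along the rows $m$ that witness a large tail $\sum_{k>r}|\hat a_{mk}|$; this is precisely why the argument has to be routed through the external result of \cite{Sargent}, the rest being the routine passage to $\hat A$ and the finite--rank approximation above.
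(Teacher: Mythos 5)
Your proposal is correct and follows essentially the same route as the paper: reduce to the associated matrix $\hat{A}\in(\ell_\infty,\ell_\infty)$ via Lemma \ref{operatornorm.1} and the isometry $z\mapsto\Delta^{(\tilde{\alpha})}z$ (so that $L_A$ is compact iff $L_{\hat{A}}$ is, as in (\ref{LAS1})--(\ref{LAS2})), and then characterize compactness of $L_{\hat{A}}$ by Sargent's criterion, the boundedness condition $\sup_n\sum_k|\hat{a}_{nk}|<\infty$ being automatic from the classical characterization of $(\ell_\infty,\ell_\infty)$. The only cosmetic difference is that you make the sufficiency direction explicit via finite--rank truncations, where the paper simply cites Sargent for the full equivalence.
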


\begin{proof}
By the help of Lemma \ref{operatornorm.1} we have $\hat{A}\in (\ell
_{\infty },\ell _{\infty })$ because $\ell_{\infty }$ is a $ BK $ space and also by Lemma \ref{lem1.3} the associated bounded linear operator
$L_{\hat{A}}$. Since $ (\ell _{\infty })^{\beta}=\ell_{1}$, therefore by the condition (b) in \cite[p. 85]{Sargent} we have: The spaces
$\ell_\infty$ and $c_0$ have the same dual space $\ell_{1}$, and therefore
the conditions \cite[(b) and (f), p.85]{Sargent} are the same. It follows that
$L_{\hat{A}}$ is compact if and only if

\[
\sup_{n}||\hat{A}_{n}||_{1}=\sup_{n}\sum_{k=0}^{\infty }|\hat{a}_{nk}|<\infty
\]
and
\[
\lim_{r\rightarrow
\infty }\sup_{n}||\hat{A}_{n}-(\hat{A}_{n})_{\mathbf{r}}||_{1}=\lim_{r\rightarrow \infty }\sup_{n}\sum_{k=r+1}^{\infty
}|\hat{a}_{nk}|=0.
\]
So the first condition is satisfied \cite[ Example 8.4.5A]{Wil2},
and it follows that $L_{\hat{A}}$ is compact if and only if the condition (\ref{linf}) yields. This completes the proof because $L_{A}$ is compact if and only if $L_{\hat{A}}$ is compact by Lemma \ref{lem1.3} and the equalities (\ref{LAS1}) and (\ref{LAS2}).
\end{proof}

\section{Conclusion}
The main aim of this paper is to consider fractional sequence spaces $ \ell_{p} (\Delta ^{(  \widetilde{\alpha} )}  ) $ for $ 1\leq p< \infty $ via Euler gamma function. We derive some estimates and identities for the norms of bounded linear operators on fractional sequence spaces $ \ell_{p} (\Delta ^{(  \widetilde{\alpha} )}  ) $. In general, Hausdorff measure of noncompactness is used to  find necessary and sufficient conditions for a matrix operator on a given sequence space to be a compact operator. However we can only obtain sufficient conditions when the final space is $\ell _{\infty }$, that is for a matrix class $ (X,\ell _{\infty }) $. This is why we use the results in \cite{katarina} and \cite{Sargent} to exactly obtain necessary and sufficient conditions for the classes of compact matrix operators in $(\ell_{p} (\Delta ^{(  \widetilde{\alpha} )}  ),\ell _{\infty })$ and
$(\ell _{\infty }(\Delta ^{(  \widetilde{\alpha} )}  ),\ell _{\infty })$. In addition to these characterizations, we apply Hausdorff measure of noncompactness to establish necessary and sufficient conditions for a matrix operator to be a compact operator from fractional sequence spaces $\ell _{p }(\Delta ^{(  \widetilde{\alpha} )}   $ and $\ell _{1 }(\Delta ^{(  \widetilde{\alpha} )}  )$ into $ Y $, where $ Y $ is any of the spaces of convergent to zero, convergent and bounded sequences $c _{0 }, c, \ell _{1 }  $.

All these results are given with the following table:
\\

\begin{center} 
	\rowcolors{0.5}{anti-flashwhite}{gray(x11gray)}
	\begin{tabular}{||l|c|c||}\hline \hline
		\begin{tabular}[t]{lr}
			& From   \\
			To &
		\end{tabular}
		&  \ \ \ \ \ $\mathbf{\ell_{p} (\Delta ^{(  \widetilde{\alpha} )}  )}$ \ \ \ & \ \ \ \ \ $\mathbf{\ell_{1}(\Delta ^{(  \tilde{\alpha} )} )}$ \ \ \  \\ 
		$c_{0}$ & {\bf 1.} & {\bf 5.} \\
		$c$& {\bf 2.} & {\bf 6.} \\
		$\ell_{\infty}$ & {\bf 3.} & {\bf 7.} \\ 
		$\ell_{1}$ & {\bf 4.} & {\bf -} \\ \hline  
	\end{tabular}
\end{center}

\begin{center} 
	\textbf{Table.} Necessary and sufficient conditions for an operator to be compact 
\end{center}
\begin{itemize}
	\item[[\textbf{1.}] ]$ \lim\limits_{r\rightarrow \infty }\sup\limits_{n\geq r}\left(\sum\limits_{k=0}^{\infty}\left\vert \sum\limits_{j=k}^{\infty
	}(-1)^{j-k}\frac{\Gamma(-\tilde{\alpha} +1)}{(j-k)!\Gamma(-\tilde{\alpha} +j-k+1)}a_{nj}\right\vert^q \right)^{\frac 1q} =0. $ 
	\item[[\textbf{2.}] ]$ \lim\limits_{r\rightarrow \infty }\sup\limits_{n\geq r}\left(\sum\limits_{k=0}^{\infty}\left\vert \sum\limits_{j=k}^{\infty
	}(-1)^{j-k}\frac{\Gamma(-\tilde{\alpha} +1)}{(j-k)!\Gamma(-\tilde{\alpha} +j-k+1)}a_{nj}-\lim\limits_{n\rightarrow \infty }\hat{a}_{nk}\right\vert^q \right)^{\frac 1q} =0. $
	\item[[\textbf{3.}] ] $ \lim\limits_{r\rightarrow \infty
	}\sup\limits_{n}\left(\sum\limits_{k=r+1}^{\infty }\left\vert \sum\limits_{j=k}^{\infty
}(-1)^{j-k}\frac{\Gamma(-\tilde{\alpha} +1)}{(j-k)!\Gamma(-\tilde{\alpha} +j-k+1)}a_{nj}\right\vert^{q}\right)^{\frac{1}{q}}=0.$
	\item[[\textbf{4.}] ] $ \lim\limits_{r\rightarrow \infty }  \sup\limits_{N \in \mathcal{F}_r} \left(\sum\limits_{k=0}^{\infty}\left\vert \sum\limits_{n\in N}
	\sum\limits_{j=k}^{\infty
	}(-1)^{j-k}\frac{\Gamma(-\tilde{\alpha} +1)}{(j-k)!\Gamma(-\tilde{\alpha} +j-k+1)}a_{nj}\right\vert^q\right)^{\frac	1q} =0 . $
	\item[[\textbf{5.}] ] $ \lim\limits_{r\rightarrow \infty }\sup\limits_{n\geq r} \left(\sup\limits_{k} \left\vert \sum\limits_{j=k}^{\infty
	}(-1)^{j-k}\frac{\Gamma(-\tilde{\alpha} +1)}{(j-k)!\Gamma(-\tilde{\alpha} +j-k+1)}a_{nj}\right\vert\right)=0.$
	\item[[\textbf{6.}] ] $ \lim\limits_{r\rightarrow \infty }\sup\limits_{n\geq r}\left(\sup\limits_{k}\left\vert \sum\limits_{j=k}^{\infty
	}(-1)^{j-k}\frac{\Gamma(-\tilde{\alpha} +1)}{(j-k)!\Gamma(-\tilde{\alpha} +j-k+1)}a_{nj}-\lim\limits_{n\rightarrow \infty }\hat{a}_{nk}\right\vert\right)=0. $
	\item[[\textbf{7.}] ] $	\lim\limits_{m\to\infty}\sup\limits_{1\leq n\leq m}|\hat {a}_{n,k_1}-\hat{a} _{n,k_2}|=\sup\limits_n|\hat{a}_{n,k_1}-\hat{a}_{n,k_2}|$, \\
	uniformly in $k_1$ and $k_2$, $(1\leq k_1,k_2<\infty)$ .
\end{itemize}

\section*{Compliance with ethical standards}
\subsection*{Conflict of interest}
The authors declare that they have no conflict of
interest.

\subsection*{Ethical approval}
This article does not contain any studies with human
participants or animals performed by any of the authors.


\begin{thebibliography}{99}

\bibitem{Wil2} A. Wilansky, Summability through Functional
Analysis, North--Holland Mathematics Studies 85, Amsterdam, New
York, Oxford, 1984.

\bibitem{kaoz} A. Karaisa and F. \"{O}zger, Almost difference sequence space derived by using a generalized weighted mean. J. Comput. Anal. Appl. 19(1), (2015) 27--38.

\bibitem{kaoz2} A. Karaisa and F. \"{O}zger, On almost convergence and difference sequence spaces of order $m$ with core theorems, Gen. Math. Notes, 26(1), (2015), 102--125.

\bibitem{p15} B. Altay and F. Ba\c sar, Certain topological properties and duals of the matrix domain of a triangle matrix in a sequence
space, J. Math. Anal. Appl. \textbf{336}(1) (2007), 632--645.

\bibitem{AyBas}C. Aydın and F. Ba\c{s}ar, Some new difference sequence spaces. Appl. Math. Comput. \textbf{157}(3) (2004), 677–693.
 

\bibitem{ema3} E. Malkowsky and F. \"{O}zger, Some mixed paranorm spaces,
Filomat \textbf{31}(4) (2017), 1079--1098.


\bibitem{ema} E. Malkowsky and F. \"{O}zger, A note on some sequence spaces of weighted means, Filomat \textbf{26}(3) (2012), 511–-518.

\bibitem{ema2} E. Malkowsky and F. \"{O}zger, Compact operators on spaces of sequences of weighted means, AIP Conf. Proc. \textbf{1470} (2012), 179--182.

\bibitem{mo} E. Malkowsky, F. \"{O}zger and A. Alotatibi, Some notes on matrix mappings and their Hausdorff measure of noncompactness, Filomat \textbf{28}5 (2014), 1059–-1072.

\bibitem{mov} E. Malkowsky, F. \"Ozger and V. Veli\v{c}kovi\'{c}, Some spaces related to Cesaro sequence spaces and an application to crystallography, MATCH Commun. Math. Comput. Chem. \textbf{70}(3) (2013), 867--884.

\bibitem{mov2} E. Malkowsky, F. \"Ozger and V. Veli\v{c}kovi\'{c}, Matrix transformations on mixed paranorm spaces, Filomat \textbf{31}(10) (2017), 2957--2966.

\bibitem{mov3} E. Malkowsky, F. \"Ozger and V. Veli\v{c}kovi\'{c}, Visualization of the spaces $W(u, v; \ell_{p})$ and their duals, AIP Conf. Proc. \textbf{1759} (2016), doi: 10.1063/1.4959634.

\bibitem{Mara7} E. Malkowsky and V. Rako\v{c}evi\'{c}, On matrix
domains of triangles, Appl. Math. Comput. \textbf{189} (2007),
1148--1163.

\bibitem{mara} E. Malkowsky and V. Rako\v{c}evi\'{c}, An introduction into the
theory of sequence spaces and measures of noncompactness, Zb. Rad.
(Beogr.) \textbf{9}(17) (2000), 143--234.


\bibitem{karabasmur} E.E. Kara, M. Ba\c{s}ar\i r and  M. Mursaleen, Compactness of matrix operators on some sequence spaces derived by Fibonacci Numbers, Kragujevac Journal of Math. \textbf{39}(2) (2015) 217-230.

\bibitem{fofb} F. \"Ozger and  F. Ba\c{s}ar, Domain of the double sequential
band matrix $B(\widetilde{r},\widetilde{s})$ on some Maddox's
spaces, AIP Conf. Proc. \textbf{1470} (2012), 152--155.

\bibitem{fofb2} F. \"Ozger and  F. Ba\c{s}ar, Domain of the double sequential
band matrix $B(\widetilde{r},\widetilde{s})$ on some Maddox's
spaces, Acta Mathematica Scientia \textbf{34}(2) (2014), 394-408.

\bibitem{FatEr} F. Nuray, U. Ulusu and E. Dündar, Lacunary statistical convergence of double sequences of sets. Soft Comput. \textbf{20} (2016), 2883–-2888.



\bibitem{Fur} H. Furkan, On some $\lambda$ difference sequence spaces of fractional order, J. Egypt. Math. Soc. \textbf{25} (2017), 37--42.


\bibitem{kızmaz} H. K\i zmaz, \textit{On certain sequence spaces}, Canad. Math.
Bull. \textbf{24}(2) (1981), 169--176.

\bibitem{PolAl}H. Polat and B. Altay, On some new Euler difference sequence spaces. Southeast Asian Bull Math. \textbf{30} (2006), 209-–220.

\bibitem{djolovıc} I. Djolovi\'{c} and E. Malkowsky, A note on compact
operators on matrix domains, J. Math. Anal. Appl. \textbf{340} (2008),
291--303.

\bibitem{djolovic2} I. Djolovi\'{c} and E. Malkowsky, Matrix transformations
and compact operators on some new $m$th order difference sequences,
Appl. Math. Comput. \textbf{198}(2) (2008), 700--714.

\bibitem{katarina} K. Petkovi\'{c}, Some new results related to compact matrix operators in the classes $A\in ((\ell_p)_T,\ell_\infty)$, Acta Mathematica Sinica
\textbf{31}(8), (2015), 1339-1347.


\bibitem{candan} M. Candan, Domain of the double sequential band matrix in the classical sequence spaces, J. Inequal. Appl. \textbf{2012}(281) (2012),
doi:10.1186/1029-242X-2012-281.


\bibitem{kirisci} M. Kiri\c{s}\c{c}i and F. Ba\c{s}ar, Some new sequence spaces
derived by the domain of generalized difference matrix, Comput.
Math. Appl. \textbf{60}(5) (2010), 1299--1309.


\bibitem{Muretal}M. Mursaleen, V. Karakaya, H. Polat and N. Simşek, Measure of noncompactness of matrix operators on some difference sequence spaces
of weighted means, Comput. Math. Appl. \textbf{62}(2) (2011), 814--820.



\bibitem{murnom} M. Mursaleen, A.K. Noman, Compactness by Hausdorff the measures of noncompactness, Nonlinear Analysis, 73, (2010) 2541-2557.

\bibitem{Stieglitz} M. Stieglitz and H. Tietz, Matrixtransformationen von
Folgenr\"{a}umeneine Ergebnis\"{u}bersicht, Math. Z. \textbf{154} (1977), 1--16.

\bibitem{Talo} E. Yavuz and \"O. Talo, Abel summability of sequences of fuzzy numbers, Soft Comput. \textbf{20} (2016), 1041–-1046.

\bibitem{BalDut} P. Baliarsingh and S. Dutta, On the classes of fractional order of difference sequence spaces and matrix transformations,
Appl. Math. Comput. \textbf{250} (2015), 665--674.




\bibitem{ColEt} R. \c{C}olak and M. Et, On some generalized difference sequence spaces and related matrix transformations.
Hokkaido Math J. \textbf{26}(3) (1997), 483-–492.




\bibitem{KadBal} U. Kadak and P. Baliarsingh, On certain Euler difference sequence spaces of fractional order and related dual properties,
J. Nonlinear Sci. Appl. \textbf{8} (2015), 997--1004.

\bibitem{Sargent} W. L. C. Sargent, On compact matrix transformations
between sectionally bounded $BK$ spaces, J. London Math. Soc. 41, (1966)
79-87.
\end{thebibliography}
\end{document}